\newtheorem{Th}{\bf Theorem}[section]
\newtheorem{Prop}[Th]{\bf Proposition}
\newtheorem{Le}[Th]{\bf Lemma}
\newtheorem{Co}[Th]{\bf Corollary}
\newtheorem{Ex}[Th]{\bf Example}
\newtheorem{Rem}[Th]{\bf Remark}
\newcommand{\NN}{\mathbb{N}}
\newcommand{\MM}{\mathbb{M}}
\newcommand{\EE}{\mathbb{E}}
\newcommand{\FF}{\mathbb{F}}
\begin{document}

\title[cofinite subsets and double negation topologies]{cofinite subsets and
double negation topologies on locales of filters and ideals}

\author{Luis Espa\~{n}ol, Jos\'e Manuel Garc\'{\i}a-Calcines, M. Carmen M\'{\i}nguez} \maketitle

\begin{abstract}
We study the role of the filter $c\mathcal{K}(X)$ of cofinite
subsets of $X$ in the locale $\mathcal{F}ilt(X)$ of all filters on
$X$, by means of the double negation topology of
$\mathcal{F}ilt(X)$, and an essential locale morphism
$\mathcal{P}(X)^{op}\to\mathcal{F}ilt(X)$. Moreover, in the case
$X=\NN$, we characterise cofinite subsets by means of the double
negation topology on the monoid $\MM$ of the maps $\NN\to\NN$ with
finite fibers, or on the submonoid $\EE\subseteq\MM$ of the
monotone and injective maps $\NN\to\NN$.
\end{abstract}

%%%%%%%%%%%%%%%%%%%%%%%%%%%%%%%%%%%%%%%%%%%%%%%%%%%%%%%%%%%%%%%%%
%%%%%%%%%%%%%%%%%%%%%%%%%%%%%%%%%%%%%%%%%%%%%%%%%%%%%%%%%%%%%%%%%

\section*{Introduction}

In order to define convergent sequences in a set $X$ endowed with
a convergence structure, cofinite subsets of $\NN$ and filters are
involved. Indeed, in general, a sequence $s:\NN\to X$ is
convergent to a point $x\in X$ if for any subset $U$ in certain
filter of subsets of $X$ containing $x$, we have that $s^{-1}(U)$
is cofinite. A basic property of convergent sequences in a broad
setting (for instance \cite{kis}, \cite{jst1}, \cite{egm}) is that
for any subsequence there exists a convergent subsequence.

In the context of exterior spaces \cite{g-g-h} (an exterior space
is nothing else than a topological space endowed with a filter of
open subsets, while an exterior map is a continuous map which
preserves the filters by inverse image) there is a notion of
convergence to filters, convergence to points being a particular
case. Namely, a sequence $s$ ``converges" to a given filter
$\mathcal{F}$ if $s^{-1}(U)$ is cofinite for any
$U\in\mathcal{F}.$ The above basic property of the convergence
stated with subsequences also works in this case. Any subsequence,
identified as a set-map $\NN\stackrel{u}{\to }\NN$, must be a
convergent sequence (in this sense) with respect to the filter
$c\mathcal{K}(\NN)$ of all cofinite subsets of $\NN$. This means
that $u$ preserves cofinite subsets for inverse image; or
equivalently, it has finite fibers. Therefore the monoid $\MM$ of
all finite fibers maps $\NN\to\NN$ plays an important role in this framework. 
This monoid and its $\MM$-sets are deeply involved in the study of the
category of exterior spaces \cite{g-h,ghr,hp}. We point out that
subsequences are usually limited to the submonoid of $\MM$
consisting of all monotone injective maps.

Any general monoid $\mbox{M}$ (we denote its operation by $f\circ
g$, recalling maps and compositions) has a locale $\Omega$ of
right ideals, which is the object of true-values for
$\mbox{M}$-sets \cite{m-m, l-l1,l-l2}. The \emph{double negation
topology} on $\mbox{M}$ is the $\mbox{M}$-subset of $\Omega$ formed by those
ideals such that $\neg\neg I=\mbox{M}$. Then, an ideal $I$ belongs
to the double negation topology if, and only if, for any $f\in
\mbox{M}$ there exists $g\in \mbox{M}$ such that $f\circ g\in I$.
This statement with quantifiers is formally similar to that used
with subsequences in the basic property of the convergence.
Therefore it seems useful the study of the relationship between
cofinite subsets and the double negation topology in monoids of
subsequences.

On the other hand, the set $\mathcal{F}ilt(X)$ of all filters on a
set $X$ is also a locale with its internal logic, negation
operator, etc. \cite{jst2,m-m}, and we consider convenient to
analyse the role of the filter $c\mathcal{K}(\NN)$ in it. In
particular its relation to the \emph{double negation topology} of
$\mathcal{F}ilt(X)$.

The first aim of this paper is to relate the filter
$c\mathcal{K}(X)$ of all cofinite subsets of $X$ to the double negation
topology of $\mathcal{F}ilt(X)$. In Section 1 (Theorem \ref{clu1})
we prove that the open sublocale of $\mathcal{F}ilt(X)$ defined by
the filter $c\mathcal{K}(X)$ is isomorphic to
$\mathcal{P}(X)^{op}$ (all subsets of $X$). In order to
characterise cofinite subsets (Theorem \ref{acont}) Section 2
concerns with the case $X=\NN$. We prove that a subset
$A\subseteq\NN$ is cofinite if, and only if, the ideal
$\mbox{Cont}(A)=\{f\in\MM;\hspace{3pt}\mbox{Im}(f)\subseteq
A\}\subseteq\MM$ belongs to the double negation topology on $\MM$.
In Section 3 we introduce the submonoid $\EE\subseteq\MM$ of all
monotone injective maps $\NN\to \NN$ and prove that the above
characterisation also works (Theorem \ref{aconte}) with the
submonoid $\EE$ instead of $\MM$. Finally, we describe (Theorems
\ref{Ldn} and \ref{Fdn}) the double negation topology on $\EE$,
and we include some notes about the bijection
$c\mathcal{K}(\NN)\cong\FF$, where $\FF\subseteq\EE$ is the
submonoid formed by the maps $u\in\EE$ such that $\mbox{Im}(u)$ is
cofinite.

%%%%%%%%%%%%%
\section{The locale of all filters on a set}
%%%%%%%%%%%%%%%%%

Let $(P,\leq)$ be any poset. By definition $P$ is a
$\wedge$-lattice if it is closed by finite meets; in particular,
it has a top element, denoted 1, and $(P,\wedge,1)$ is a monoid.
Then $x\leq y$ if and only if $x\wedge y=x$. A subset $S\subseteq
P$ is an \emph{upper subset} if $y\geq x\in S$ implies $y\in S$. A
\emph{filter} $F$ of $P$ is an upper subset which is a submonoid;
then $1\in F$, so it is nonempty. Any nonempty upper subset
contains a filter as the following result shows.

\begin{Le}\label{filinup}${}${\rm
Consider $S$ a nonempty upper subset of a given $\wedge$-lattice
$P.$ Then $S$ contains the filter $S^o=\{x\in P;\forall y\in S,
x\wedge y\in S\}.$ Moreover, $S$ is a filter if, and only if,
$S=S^o$. }
\end{Le}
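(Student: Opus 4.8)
The plan is to establish three facts in turn: that $S^o$ is a filter, that $S^o\subseteq S$, and finally the stated equivalence. The first two give the main assertion, and the equivalence follows with little extra work.

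First I would verify that $S^o$ is a filter. To see it is an upper subset, take $x\in S^o$ and $z\geq x$; for any $y\in S$ one has $z\wedge y\geq x\wedge y\in S$, so $z\wedge y\in S$ because $S$ is upper, whence $z\in S^o$. For the submonoid condition, note $1\in S^o$ since $1\wedge y=y\in S$ for every $y\in S$; and if $x_1,x_2\in S^o$ then for any $y\in S$ associativity of $\wedge$ gives $(x_1\wedge x_2)\wedge y=x_1\wedge(x_2\wedge y)$, and applying the defining property of $S^o$ twice—first with $x_2$, then with $x_1$—places this element in $S$, so $x_1\wedge x_2\in S^o$.

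The containment $S^o\subseteq S$ is where the hypotheses on $S$ are used essentially, and it is the step I expect to be the only subtle point. Given $x\in S^o$, I would pick any $y\in S$ (possible since $S$ is nonempty) and observe $x\wedge y\in S$ by the definition of $S^o$. Since $x\geq x\wedge y$ and $S$ is upper, it follows that $x\in S$. It is precisely the nonemptiness together with the upper-subset property that force $S^o$ inside $S$; without either hypothesis the inclusion could fail.

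Finally, for the equivalence: if $S=S^o$ then $S$ is a filter by the first step. Conversely, if $S$ is a filter I would show the remaining inclusion $S\subseteq S^o$, the reverse one already being established: for $x\in S$ and any $y\in S$, closure of the submonoid $S$ under $\wedge$ yields $x\wedge y\in S$, so $x\in S^o$. Combined with $S^o\subseteq S$ this gives $S=S^o$, completing the proof.
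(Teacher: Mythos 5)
Your proof is correct and takes essentially the same route as the paper's: both establish $S^o\subseteq S$ by picking $y\in S$ and using that $S$ is upper with $x\geq x\wedge y$, and both verify that $S^o$ is an upper subset closed under meets via the same associativity computation $(x\wedge x')\wedge y=x\wedge(x'\wedge y)$. Your write-up is merely more explicit than the paper on two points it leaves as "clear" --- the membership $1\in S^o$ and the final equivalence --- which is fine.
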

\begin{proof}[Proof]
Note that the condition $x\wedge y\in S$ implies $x\in S$, so
$S^o\subseteq S.$ It is clear that $S^o$ is an upper subset since
so is $S$. Finally, $S^o$ is closed by finite meets, because given
$x,x'\in S^o$, for any $y\in S$ we have $x'\wedge y\in S$, and
$(x\wedge x')\wedge y=x\wedge(x'\wedge y)\in S,$ meaning that
$x\wedge x'\in S^o.$ The second part is clear.
\end{proof}

By $\mathcal{P}(X)$ (resp, $\mathcal{K}(X)$, $c\mathcal{K}(X)$,
$\mathcal{P}_\infty(X)$) we will denote the family of all subsets
(resp. finite subsets, cofinite subsets, infinite subsets) of a
given set $X$. $\mathcal{P}_\infty(X)$ is the complement of
$\mathcal{K}(X)$ in $\mathcal{P}(X)$, and $c\mathcal{K}(X)$ is the
family formed by the complements in $X$ of the subsets that belong
to $\mathcal{K}(X)$. If $X$ is finite, then
$\mathcal{P}(X)=\mathcal{K}(X)=c\mathcal{K}(X)$, and
$\mathcal{P}_\infty(X)=\emptyset$. We are interested in the $X$
infinite case, in particular $X=\NN$, because in this case the
cofinite subsets are nontrivial.

\medskip
A \emph{filter on} a set $X$ is a filter $\mathcal{F}$ of the
$\cap$-lattice $\mathcal{P}(X)$, that is, a (nonempty) family of
subsets of $X$ closed under finite intersections and whenever
$X\supseteq U\supseteq V\in\mathcal{F}$ we have that $U\in
\mathcal{F}$. The families $\mathcal{P}(X)$ and $c\mathcal{K}(X)$
are filters, the latter called \emph{Fr\'{e}chet filter} of $X$.
$\mathcal{K}(X)$ is a filter only when $X$ is finite. When $X$ is
infinite, $\mathcal{P}_\infty(X)$ is an upper subset but not a
filter. Taking into account that a subset $A\subseteq\NN$ is
cofinite if and only if for any infinite subset $B\subseteq\NN$,
$A\cap B$ is infinite (equivalently, there exists $B'\subseteq B$
infinite such that $B'\subseteq A$) the following result is clear:

\begin{Co}\label{cof}${}${\rm
$c\mathcal{K}(X)=\mathcal{P}_\infty(X)^o$. }\end{Co}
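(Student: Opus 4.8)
The plan is to unfold the definition of $(-)^o$ from Lemma \ref{filinup}, applied to $S=\mathcal{P}_\infty(X)$ inside the $\cap$-lattice $\mathcal{P}(X)$, and then to reduce everything to the elementary combinatorial characterisation of cofinite subsets quoted just above the statement. Since the meet in $\mathcal{P}(X)$ is intersection, the definition reads
$$\mathcal{P}_\infty(X)^o=\{A\subseteq X;\ \forall B\in\mathcal{P}_\infty(X),\ A\cap B\in\mathcal{P}_\infty(X)\},$$
so that $A\in\mathcal{P}_\infty(X)^o$ exactly when $A\cap B$ is infinite for every infinite $B\subseteq X$. Thus the whole claim amounts to the equivalence: $A$ is cofinite if, and only if, $A\cap B$ is infinite for every infinite $B$.

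First I would prove the inclusion $c\mathcal{K}(X)\subseteq\mathcal{P}_\infty(X)^o$. If $A$ is cofinite then $X\setminus A$ is finite, and for any infinite $B$ one has $A\cap B=B\setminus(X\setminus A)$, which is the infinite set $B$ with only finitely many elements removed, hence still infinite. Therefore $A$ meets every infinite subset in an infinite set, i.e. $A\in\mathcal{P}_\infty(X)^o$.

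For the reverse inclusion $\mathcal{P}_\infty(X)^o\subseteq c\mathcal{K}(X)$ I would argue by contraposition. If $A$ is not cofinite, then $B=X\setminus A$ is itself infinite, while $A\cap B=\emptyset$ fails to be infinite; hence $A\notin\mathcal{P}_\infty(X)^o$. Combining the two inclusions gives the asserted equality.

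There is no genuine obstacle here: the argument is a direct translation of definitions, which is why the text announces the result as clear. The only point deserving a word of care is that Lemma \ref{filinup} produces a \emph{filter} $\mathcal{P}_\infty(X)^o$ only because $\mathcal{P}_\infty(X)$ is a nonempty upper subset, and this nonemptiness needs $X$ infinite --- precisely the case of interest, where cofinite subsets are nontrivial. When $X$ is finite both sides collapse to $\mathcal{P}(X)$ and the identity holds trivially, so the statement is correct in full generality.
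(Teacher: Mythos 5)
Your proof is correct and takes essentially the same route as the paper, which states the corollary as ``clear'' precisely from the two ingredients you spell out: the formula $S^o=\{A;\ \forall B\in S,\ A\cap B\in S\}$ of Lemma \ref{filinup} applied to $S=\mathcal{P}_\infty(X)$, and the elementary characterisation that $A$ is cofinite if and only if $A\cap B$ is infinite for every infinite $B$, which the paper quotes in the sentence immediately preceding the statement. Your additional care about the finite-$X$ case and the nonemptiness hypothesis in Lemma \ref{filinup} is sound and merely makes explicit what the paper leaves implicit.
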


\medskip Any family $\{U_i;i\in I\}$ of subsets of a set $X$
generates the filter $$<U_i;i\in I>=\{U\subseteq X; \exists
i_1,...,i_n\in I, U_{i_1}\cap ... \cap U_{i_n}\subseteq U\}.$$ A
family $\{U_i;i\in I\}$ is called \emph{base filter} (or for
short, just \emph{base}) if for any $i,j\in I$ there exists $k\in
I$ such that $U_k\subseteq U_i\cap U_j$; then the generated filter
is
$$\{U\subseteq X; \exists i\in I, U_i\subseteq U\}.$$ The family
$\NN_{\geq p}=\{n\in\NN;n\geq p\}$, $p\in\NN$, is a base of the
Fr\'echet filter on $\NN$.

Important particular cases of this general construction will be
used. For any subset $A\subseteq X$, we have the filter
$\mathcal{U}(A)=\{U;A\subseteq U\}$, with
$\mathcal{U}(x)=\mathcal{U}(\{x\})$ if $x\in X$. When
$A=\emptyset$ we have the \emph{discrete filter}
$\mathcal{U}(\emptyset)=\mathcal{P}(X)$ (the greatest filter), and
when $A=X$ the \emph{indiscrete filter} $\mathcal{U}(X)=\{X\}$
(the lowest filter).

\begin{Ex}\label{int}${}${\rm
Given $A\subseteq X$, the family $\int (A)=\{U\subseteq X;A\cap
U\neq\emptyset\}$ (this notation is taken from \cite{lr}) is an
upper subset of $\mathcal{P}(X)$, which is a filter if, and only
if, $A=\{a\}$; in this case $\int (a)=\mathcal{U}(a)$. In general,
we have $\int (A)^o=\mathcal{U}(A)$. A particular case is $\int
(X)^o=\{A\subseteq X;A\neq\emptyset\}^o=\{X\}$. }
\end{Ex}

By the \emph{limit} of a filter $\mathcal{F}$ on $X,$ denoted as
$\ell(\mathcal{F})$, we mean the intersection of all the subsets of $\mathcal{F}$. The
elements of $\ell(\mathcal{F})$ are called \emph{limit points} of
$\mathcal{F}$. For instance, $\ell(\mathcal{U}(A))=A$, and
$\ell(c\mathcal{K}(X))=\emptyset$. For any subset $A\subseteq X$,
we denote by $\mathcal{C}(A)$ the the filter of all cofinite
subsets of $X$ containing $A$, that is,
$\mathcal{C}(A)=c\mathcal{K}(X)\cap\mathcal{U}(A).$ Then
$\mathcal{C}(X)=\{X\}$ and
$\mathcal{C}(\emptyset)=c\mathcal{K}(X)$. We also have
$\ell(\mathcal{C}(A))=A.$

\medskip

Let $\mathcal{F}ilt(X)$ denote the set of all filters on $X$.
Since the arbitrary intersection of filters is a filter, the set
$\mathcal{F}ilt(X)$ is a complete lattice. The supremum of a
family of filters is the filter generated by the finite
intersections of subsets in all the filters, and it is
straightforward to check that finite infima distribute arbitrary
suprema. Hence $\mathcal{F}ilt(X)$ is a frame or locale (see
\cite{jst2} or \cite{m-m}). It is important to recall that the
dual notion of a filter $\mathcal{F}\subseteq\mathcal{P}(X)$ is
that of \emph{ideal} $\mathcal{I}\subseteq\mathcal{P}(X)$, that
is, a nonempty family of subsets of $X$ such that $\mathcal{I}$ is
closed under finite unions and whenever $U\subseteq
V\in\mathcal{I}$ we have that $U\in \mathcal{I}.$ An ideal
$\mathcal{B}$ of $\mathcal{P}(X)$ is called \emph{bornology into}
$X$ with \emph{extent}
$\mbox{E}(\mathcal{B})=\bigcup\{A\in\mathcal{B}\}.$ $\mathcal{B}$
is also said to be a \emph{bornology on} $\mbox{E}(\mathcal{B})$,
and the pair $(\mbox{E}(\mathcal{B}),\mathcal{B})$ is a
\emph{bornological space} \cite{l-l2}.

Given any filter $\mathcal{F},$ the family $\mathcal{F}^b$ of all
complements $X\setminus U$ of the elements $U\in\mathcal{F}$ is a
bornology into $X$ with extent $X\setminus \ell(\mathcal{F}).$ Conversely,
given any bornology $\mathcal{B}$ on $A\subseteq X,$ one can
consider the filter $\mathcal{B}^e$ in $X$ defined by the all
complements $X\setminus U$ of elements $U\in \mathcal{B};$ then
$\ell(\mathcal{F})=X\setminus A.$ It is clear that $(-)^b$ and $(-)^e$ are
inverse monotone bijections between the sets $\mathcal{F}ilt(X)$
and $\mathcal{B}orn(X)$.

\begin{Rem}{\rm The set $\mathcal{B}orn(X)$ of all
bornologies in $X$ is the free compact regular locale generated by
$\mathcal{P}(X)$, and it is equivalent to the locale of open
subsets of the Stone-\v{C}ech compactification $\beta X$ of the
discrete topological space $X$ \cite[p. 93]{jst2}. The locale
$\mathcal{B}orn(X)$ was extensively used in \cite{l-l2}, where it
was proved that it is a subobject classifier of a Grothendieck
topos of $\NN^\NN$-sets for $\NN^\NN=\mathbf{Set}(\NN,\NN)$.}
\end{Rem}

The next results, Lemma \ref{impli1} and Theorem \ref{clu1}, are
translations to filters of those dual results on ideals
(bornologies) in \cite[p. 115]{l-l2}. However, we will include the
proof to make this article self-contained.

\begin{Le}\label{impli1}${}${\rm
The implication and the negation in the Heyting algebra
$\mathcal{F}ilt(X)$ are, respectively, the operations
$$\mathcal{F}\to\mathcal{G}=\{A\subseteq X;\forall U\in\mathcal{F},
A\cup U\in\mathcal{G}\};\hspace{15pt}\neg\mathcal{F}=\mathcal{U}(X\setminus\ell(\mathcal{F}))$$
}
\end{Le}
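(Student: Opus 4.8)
The plan is to verify the two formulas by checking that they satisfy the defining universal properties of the Heyting implication and negation in the frame $\mathcal{F}ilt(X)$. Recall that in any Heyting algebra, $\mathcal{F}\to\mathcal{G}$ is characterised by the adjunction $\mathcal{H}\leq(\mathcal{F}\to\mathcal{G})$ if and only if $\mathcal{H}\wedge\mathcal{F}\leq\mathcal{G}$, and $\neg\mathcal{F}=(\mathcal{F}\to 0)$, where the order is inclusion of filters, the meet $\mathcal{H}\wedge\mathcal{F}$ is the intersection $\mathcal{H}\cap\mathcal{F}$, and the bottom element $0$ is the indiscrete filter $\mathcal{U}(X)=\{X\}$.

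First I would confirm that the proposed set $\mathcal{F}\to\mathcal{G}=\{A\subseteq X;\forall U\in\mathcal{F},A\cup U\in\mathcal{G}\}$ is actually a filter, i.e.\ an upper subset closed under finite intersections; this is a routine check using that $\mathcal{G}$ is a filter and that $(A\cap B)\cup U=(A\cup U)\cap(B\cup U)$. Next, for the adjunction I would argue both directions. For the forward direction, assume $\mathcal{H}\cap\mathcal{F}\subseteq\mathcal{G}$ and take $A\in\mathcal{H}$; then for every $U\in\mathcal{F}$ the subset $A\cup U$ lies in both $\mathcal{H}$ (upper, above $A$) and $\mathcal{F}$ (upper, above $U$), hence in $\mathcal{H}\cap\mathcal{F}\subseteq\mathcal{G}$, so $A\in(\mathcal{F}\to\mathcal{G})$. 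For the converse, assume $\mathcal{H}\subseteq(\mathcal{F}\to\mathcal{G})$ and take $A\in\mathcal{H}\cap\mathcal{F}$; applying the defining condition with $U=A\in\mathcal{F}$ gives $A=A\cup A\in\mathcal{G}$. This establishes the implication formula.

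For the negation, I would specialise $\mathcal{G}=\{X\}$ in the implication formula, so that $\neg\mathcal{F}=\{A\subseteq X;\forall U\in\mathcal{F},A\cup U=X\}$. The key observation is that $A\cup U=X$ is equivalent to $X\setminus U\subseteq A$, so the condition ``$A\cup U=X$ for all $U\in\mathcal{F}$'' says precisely that $A$ contains every complement $X\setminus U$, i.e.\ $A$ contains $\bigcup_{U\in\mathcal{F}}(X\setminus U)=X\setminus\bigcap_{U\in\mathcal{F}}U=X\setminus\ell(\mathcal{F})$. Hence $\neg\mathcal{F}=\{A;X\setminus\ell(\mathcal{F})\subseteq A\}=\mathcal{U}(X\setminus\ell(\mathcal{F}))$, as claimed.

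The only mild subtlety, and the place I would take most care, is the computation of the bottom element and the meet in $\mathcal{F}ilt(X)$: one must remember that the order on filters is inclusion, so the \emph{largest} filter $\mathcal{P}(X)$ is the top $1$ and the \emph{smallest} filter $\{X\}$ is the bottom $0$, and correspondingly the lattice meet is ordinary intersection while the join is the generated filter. Once this orientation is fixed, both verifications are direct; the interchange of the universal quantifier over $\mathcal{F}$ with the union of complements in the negation step is the computational heart of the argument, but it poses no real obstacle.
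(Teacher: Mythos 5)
Your proof is correct and takes essentially the same route as the paper's: both identify the formula via the Heyting adjunction $\mathcal{H}\cap\mathcal{F}\subseteq\mathcal{G}\Leftrightarrow\mathcal{H}\subseteq(\mathcal{F}\to\mathcal{G})$, and your negation computation ($A\cup U=X$ iff $X\setminus U\subseteq A$, then taking the union over $U\in\mathcal{F}$ to get $X\setminus\ell(\mathcal{F})$) is exactly the paper's. The only cosmetic difference is that you verify the adjunction against an arbitrary filter $\mathcal{H}$ (together with a filterhood check on the candidate set), whereas the paper probes only with the principal filters $\mathcal{U}(A)$, using the identity $\mathcal{U}(A)\cap\mathcal{F}=\{A\cup U;\,U\in\mathcal{F}\}$.
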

\begin{proof}[Proof]
Obviously, $A\in \mathcal{F}\to\mathcal{G}$ if, and only if,
$\mathcal{U}(A)\subseteq \mathcal{F}\to \mathcal{G}.$ But by the
definition of implication, this is equivalent to
$\mathcal{U}(A)\cap \mathcal{F}\subseteq\mathcal{G}$, where
$\mathcal{U}(A)\cap\mathcal{F}=\{A\cup U;U\in\mathcal{F}\}.$ For
the negation $\neg\mathcal{F}=\mathcal{F}\to\{X\},$ note that
$A\cup U=X$ means $A\supseteq X\setminus U,$ for any
$U\in\mathcal{F};$ that is
$A\in\mathcal{U}(X\setminus\ell(\mathcal{F})).$
\end{proof}

\begin{Ex} \label{exl} {\rm
Suppose $\emptyset\neq A\subsetneq X$ such that $X\setminus A$
infinite. Then the filter $\mathcal{F}=\{A\cup U; U\in
c\mathcal{K}(X\setminus A)\}$ satisfies $\ell(\mathcal{F})=A$ but
$A\notin\mathcal{F}$, with $\neg\neg\mathcal{F}=\mathcal{U}(A)$. }
\end{Ex}

The relation of the locale $\mathcal{F}ilt(X)$ with
$\mathcal{P}(X)$ is shown in the next Theorem \ref{clu1}, where it
is proved that $\mathcal{P}(X)^{op}$ is isomorphic to the open
sublocale of $\mathcal{F}ilt(X)$ defined by $c\mathcal{K}(X)$.

\begin{Th}\label{clu1}${}${\rm
$\mathcal{C},\mathcal{U}: \mathcal{P}(X)^{op}\to\mathcal{F}ilt(X)$
and $\ell:\mathcal{F}ilt(X)\to\mathcal{P}(X)^{op}$ are monotone
maps, and they satisfy the following properties:
%$$\xymatrix{
%\mathcal{C}\dashv\ell\dashv\mathcal{U}: &  \mathcal{P}(X)^{op}
%\ar@(l,u)[]^{id}  \;\ar[rr]   &  &  \;\mathcal{F}ilt(X).
%\ar@(r,u)[]_{\neg\neg}  }$$

\begin{enumerate}
\item[(i)] $\ell\circ\mathcal{C}=id=\ell\circ\mathcal{U}.$
\item[(ii)] $\mathcal{C}\dashv\ell\dashv\mathcal{U}: \mathcal{P}(X)^{op} \to\mathcal{F}ilt(X).$
\item[(iii)] $\mathcal{C}(A\cup\ell(\mathcal{F}))=\mathcal{C}(A)\cap\mathcal{F}$ (Frobenius relation).
\item[(iv)] $\mathcal{U}\circ\ell=c\mathcal{K}(X)\to(-)$ is the double negation in $\mathcal{F}ilt(X).$
\item[(v)] A filter $\mathcal{F}$ is $\neg\neg$-dense if, and only if, $\ell(\mathcal{F})=\emptyset.$
\item[(vi)] $\mathcal{F}ilt(X)_{\neg\neg}\cong\mathcal{P}(X)^{op}\cong\{\mathcal{F}\in \mathcal{F}ilt(X);\mathcal{F}\subseteq c\mathcal{K}(X)\}$.
\end{enumerate} }
\end{Th}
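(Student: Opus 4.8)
The plan is to establish the chain of properties (i)--(vi) in order, since each feeds naturally into the next. First I would verify that $\mathcal{C}$, $\mathcal{U}$ and $\ell$ are monotone: for $\mathcal{U}$ and $\ell$ this is essentially immediate from the definitions ($A\subseteq B$ gives $\mathcal{U}(B)\subseteq\mathcal{U}(A)$, reversing order as required by the ${}^{op}$), and for $\mathcal{C}(A)=c\mathcal{K}(X)\cap\mathcal{U}(A)$ monotonicity follows from that of $\mathcal{U}$. For (i), I would use the already-recorded facts $\ell(\mathcal{U}(A))=A$ and $\ell(\mathcal{C}(A))=A$ from the discussion preceding the theorem. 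These are direct computations of the intersection of the respective filters.

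For (ii), I would prove the two adjunctions $\mathcal{C}\dashv\ell$ and $\ell\dashv\mathcal{U}$ separately, each amounting to a single equivalence of inequalities. Working in $\mathcal{P}(X)^{op}$ (so the order is reversed), $\ell\dashv\mathcal{U}$ unfolds to: $\ell(\mathcal{F})\supseteq A$ iff $\mathcal{F}\subseteq\mathcal{U}(A)$, which is transparent since $\ell(\mathcal{F})\supseteq A$ says every $U\in\mathcal{F}$ contains $A$, i.e.\ $\mathcal{F}\subseteq\mathcal{U}(A)$. Dually $\mathcal{C}\dashv\ell$ unfolds to $\mathcal{C}(A)\subseteq\mathcal{F}$ iff $A\subseteq\ell(\mathcal{F})$; here I would use that $\mathcal{C}(A)$ is the smallest filter of cofinite sets containing $A$ and argue both implications, the forward one via $\ell$ being order-preserving together with (i), the backward one by checking $c\mathcal{K}(X)\cap\mathcal{U}(A)\subseteq\mathcal{F}$ directly. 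For (iii) I would expand both sides as intersections with $c\mathcal{K}(X)$ and $\mathcal{U}$-filters and use $\mathcal{U}(A\cup\ell(\mathcal{F}))=\mathcal{U}(A)\cap\mathcal{U}(\ell(\mathcal{F}))$ together with the containment $\mathcal{F}\subseteq\mathcal{U}(\ell(\mathcal{F}))$.

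The substantive step, and the one I expect to be the main obstacle, is (iv): identifying $\mathcal{U}\circ\ell$ with both the double negation and with $c\mathcal{K}(X)\to(-)$. For the double-negation identity I would apply Lemma~\ref{impli1} twice: $\neg\mathcal{F}=\mathcal{U}(X\setminus\ell(\mathcal{F}))$, whose limit is $X\setminus\ell(\mathcal{F})$, so $\neg\neg\mathcal{F}=\mathcal{U}(X\setminus(X\setminus\ell(\mathcal{F})))=\mathcal{U}(\ell(\mathcal{F}))$. The harder half is $c\mathcal{K}(X)\to\mathcal{F}=\mathcal{U}(\ell(\mathcal{F}))$. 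Using the implication formula, $A\in c\mathcal{K}(X)\to\mathcal{F}$ iff $A\cup U\in\mathcal{F}$ for every cofinite $U$; I would show this is equivalent to $A\supseteq\ell(\mathcal{F})$. One direction is clear by taking $U=X$; for the converse I would exploit that $\mathcal{F}$ contains $\mathcal{C}(\ell(\mathcal{F}))$ and that intersecting a member of $\mathcal{F}$ with cofinite sets keeps enough room, which is where the special role of cofiniteness enters. This is the delicate point where the choice of $c\mathcal{K}(X)$ (rather than an arbitrary filter) is essential.

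Finally, (v) is an immediate consequence of (iv): $\mathcal{F}$ is $\neg\neg$-dense means $\neg\neg\mathcal{F}=\mathcal{U}(\emptyset)=\mathcal{P}(X)$, i.e.\ $\mathcal{U}(\ell(\mathcal{F}))=\mathcal{P}(X)$, which holds iff $\ell(\mathcal{F})=\emptyset$. For (vi), the equivalence $\mathcal{F}ilt(X)_{\neg\neg}\cong\mathcal{P}(X)^{op}$ follows from (iv) identifying $\neg\neg$ with the open sublocale operator $c\mathcal{K}(X)\to(-)$, so the $\neg\neg$-sheaves coincide with the fixed points $\mathcal{U}(\ell(\mathcal{F}))=\mathcal{F}$, i.e.\ the image of $\mathcal{U}$, which by (i)--(ii) is isomorphic to $\mathcal{P}(X)^{op}$. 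The last isomorphism with $\{\mathcal{F};\mathcal{F}\subseteq c\mathcal{K}(X)\}$ I would obtain by transporting along $\mathcal{C}$, noting that $\mathcal{C}(A)\subseteq c\mathcal{K}(X)$ always and that $\mathcal{C}$ and $\ell$ restrict to mutually inverse bijections there.
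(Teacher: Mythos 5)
Your overall route coincides with the paper's: prove (i) directly, establish the two adjunctions in (ii), get Frobenius (iii) from them, compute $\neg\neg\mathcal{F}=\mathcal{U}(\ell(\mathcal{F}))$ by applying Lemma \ref{impli1} twice via $\ell(\neg\mathcal{F})=X\setminus\ell(\mathcal{F})$, then read off (v) and (vi) formally. However, one step in (iv) fails as written: you claim the implication ``$A\in c\mathcal{K}(X)\to\mathcal{F}$ implies $A\supseteq\ell(\mathcal{F})$'' is clear ``by taking $U=X$''. Instantiating the hypothesis at $U=X$ only yields $A\cup X=X\in\mathcal{F}$, which is vacuous. The correct witness is a singleton complement: if $x\in\ell(\mathcal{F})\setminus A$, take the cofinite $U=X\setminus\{x\}$; then $A\cup U=X\setminus\{x\}\in\mathcal{F}$ forces $x\in X\setminus\{x\}$, a contradiction. (This is precisely the trick the paper uses in (i) to show $\ell(\mathcal{C}(A))\subseteq A$.) Your backward half of that equivalence is sound: for cofinite $U$, the set $A\cup U$ is cofinite and contains $\ell(\mathcal{F})$, so it lies in $\mathcal{C}(\ell(\mathcal{F}))\subseteq\mathcal{F}$ by the counit of $\mathcal{C}\dashv\ell$ --- no vague ``keeps enough room'' argument is needed.

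Two further points of comparison. First, the genuinely combinatorial content of the theorem sits exactly where you deferred it: the backward direction of $\mathcal{C}\dashv\ell$, which you leave as ``checking $c\mathcal{K}(X)\cap\mathcal{U}(A)\subseteq\mathcal{F}$ directly''. The paper's check is: for cofinite $B\supseteq A\supseteq\ell(\mathcal{F})$, write $X\setminus B=\{x_1,\dots,x_n\}$, pick $U_i\in\mathcal{F}$ with $x_i\notin U_i$, and note $U_1\cap\cdots\cap U_n\subseteq B$, so $B\in\mathcal{F}$. Once this is done, your expectation that (iv) is ``the main obstacle'' dissolves: the paper derives $c\mathcal{K}(X)\to\mathcal{F}=\mathcal{U}(\ell(\mathcal{F}))$ purely formally, chaining $A\supseteq\ell(\mathcal{F})$ iff $\mathcal{C}(A)=c\mathcal{K}(X)\cap\mathcal{U}(A)\subseteq\mathcal{F}$ (adjunction $\mathcal{C}\dashv\ell$) iff $\mathcal{U}(A)\subseteq c\mathcal{K}(X)\to\mathcal{F}$ (Heyting adjunction) iff $A\in c\mathcal{K}(X)\to\mathcal{F}$, whereas your element-wise version needs the repaired argument above. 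Second, in (iii) the containment $\mathcal{F}\subseteq\mathcal{U}(\ell(\mathcal{F}))$ that you invoke gives only $\mathcal{C}(A)\cap\mathcal{F}\subseteq\mathcal{C}(A\cup\ell(\mathcal{F}))$; for the reverse containment you again need the counit $\mathcal{C}(\ell(\mathcal{F}))\subseteq\mathcal{F}$, which (ii) supplies, so your sketch is complete only modulo citing it. With these local repairs the proposal matches the paper's proof.
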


\begin{proof}[Proof]

(i) Obviously, $\ell\circ\mathcal{U}=id$ and
$A\subseteq\ell(\mathcal{C}(A))$ hold. Moreover, $\ell
(\mathcal{C}(A))\subseteq A;$ indeed, if
$x\in\ell(\mathcal{C}(A))$ and $x\notin A ,$ then
$X\setminus\{x\}\in\mathcal{C}(A),$ and therefore $x\in
X\setminus\{x\},$ which is a contradiction. (ii) Clearly,
$\ell(\mathcal{F})\supseteq A$ if, and only if,
$\mathcal{F}\subseteq\mathcal{U}(A)$. Now we prove that
$\mathcal{C}(A)\subseteq\mathcal{F}$ if, and only if,
$A\supseteq\ell(\mathcal{F})$: Suppose
$\mathcal{C}(A)\subseteq\mathcal{F};$ then by (i) one has
$\ell(\mathcal{F})\subseteq\ell(\mathcal{C}(A))=A.$ Conversely, if
$A\supseteq\ell(\mathcal{F})$ and $B\in\mathcal{C}(A)$ we have
$A\subseteq B$ with $X\setminus B=\{x_1,\ldots,x_n\},$ so
$\{x_1,\ldots,x_n\}\subseteq X\setminus A\subseteq X\setminus
\ell(\mathcal{F})$. Therefore for each index $i,$ $x_i\in
X\setminus U_i$ for some $U_i\in\mathcal{F}$. We conclude that
$B\in\mathcal{F}$ since $B\supseteq U_1\cap\cdots\cap
U_n\in\mathcal{F}$. (iii) By applying the adjunction
$\mathcal{C}\dashv\ell $ and part (i), it is easy to verify that
$\ell(\mathcal{F}\cap \mathcal{G})=\ell(\mathcal{F})\cup
\ell(\mathcal{G})$ and
$\mathcal{C}(A\cup\ell(\mathcal{F}))\subseteq\mathcal{C}(A)\cap\mathcal{F}$.
For the converse, obverse that $B\in\mathcal{C}(A)\cap\mathcal{F}$
means that $A\subseteq B\in\mathcal{F}$ and $B$ is cofinite. But
then $\ell(\mathcal{F})\subseteq B,$ so
$B\in\mathcal{C}(A\cup\ell(\mathcal{F})).$ (iv) By Lemma
\ref{impli1} and part (i),
$\ell(\neg\mathcal{F})=X\setminus\ell(\mathcal{F}),$ so
$\neg\neg\mathcal{F}=\mathcal{U}(X\setminus\ell
(\neg\mathcal{F}))=\mathcal{U}(\ell(\mathcal{F}))$. Moreover, by
using the adjunctions in part (ii) it results the following
equivalent relations: $A\in\mathcal{U}(\ell(\mathcal{F}))$ if, and
only if,
$\mathcal{C}(A)=\mathcal{C}(\emptyset)\cap\mathcal{U}(A)\subseteq\mathcal{F}$
if, and only if, $A\in (\mathcal{C}(\emptyset)\to\mathcal{F}).$
(v) Since $\mathcal{U}(A)=\mathcal{P}(X)$ if, and only if,
$A=\emptyset$, we have by (iv) that
$\neg\neg\mathcal{F}=\mathcal{P}(X)$ if, and only if,
$\ell(\mathcal{F})=\emptyset .$  (vi) Note that
$\neg\neg\mathcal{F}=\mathcal{F}$ if, and only if,
$\mathcal{F}=\mathcal{U}(A),$ for some $A\subseteq X;$ hence
$\mathcal{F}ilt(X)_{\neg\neg}\cong\mathcal{P}(X)^{op}$. For the
second part we observe that the adjunction $\mathcal{C}\dashv\ell$
induces an equivalence
$\mathcal{P}(X)^{op}\cong\{\mathcal{F}\in\mathcal{F}ilt(X);
\mathcal{F}=\mathcal{C}(\ell(\mathcal{F}))\}$, and
$\mathcal{F}=\mathcal{C}(\ell(\mathcal{F}))$ if, and only if,
$\mathcal{F}\subseteq c\mathcal{K}(X).$
\end{proof}

\begin{Rem}{\rm The structure of $\mathcal{F}ilt(X)$ is the simplest when $X$
is finite, since then every filter is of the form
$\mathcal{U}(X)$, $\mathcal{C}=\mathcal{U}$, and
$c\mathcal{K}(X)=\mathcal{P}(X)$.} \end{Rem}

%%%%%%%%%%%%%%
\section{Characterizing cofinite subsets of $\NN$}
%%%%%%%%%%%

We will say that a map $f:X\to Y$ is \emph{finite} when the fiber
$f^{-1}(y)$ is finite for every $y\in Y$. It is clear that $f:X\to
Y$ is finite if, and only if $f^{-1}(A)$ is finite (resp.
cofinite) for any finite (resp. cofinite) $A\subseteq Y.$

The composition of finite maps is a finite map. The class of all
finite maps in the category $\mathbf{Set}$ of sets is the first
example of \emph{small maps} axiomatically introduced in
\cite{jm}. But now we are not interested in this direction, we
just need some easy properties of those maps. The proof of the
following lemmas can be omitted since they are straightforward.

\begin{Le}  \label{cofinite}{\rm
The following statements hold:
\begin{enumerate}
\item[(i)] If $X,Y$ are infinite, and $f:X\to Y$ is finite, then $\text{Im}(f)$ is infinite.
\item[(ii)] If the restriction of a map $f:X\to Y$ to a cofinite subset $A\subseteq X$ is injective, then $f$ is finite.
\item[(iii)] Consider maps $f,g:X\to Y$ with cofinite equalizer. Then $f$ is finite if, and only if, $g$ is finite.
\end{enumerate}}
\end{Le}

\begin{Le}  \label{cofA6}{\rm
Consider the composite $f=g\circ h:$ $$ \xymatrix
         {
          X\ar[rr]^f \ar[rd]_h    &  &  Y   \\
           & Z  \ar[ru]_g
         }
$$ The following statements hold:

\begin{enumerate}
\item[(i)] If $f$ is finite, then so is $h$.
\item[(ii)] If $h$ is surjective and $f$ is finite, then $g$ is also finite.
\item[(iii)] If $g$ is injective, then $f$ is finite if, and only if, $h$ is finite.
\end{enumerate}}

\end{Le}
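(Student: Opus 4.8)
The plan is to argue all three parts directly at the level of fibers, using the single identity $f^{-1}(y)=h^{-1}(g^{-1}(y))$ valid for every $y\in Y$, which follows immediately from $f=g\circ h$. I will not invoke the characterisation of finite maps through (co)finite preimages here; the fibrewise bookkeeping is the most economical route, and each part reduces to controlling the size of a single fiber.

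For part (i), I would fix $z\in Z$ and set $y=g(z)$. Every $x$ with $h(x)=z$ satisfies $f(x)=g(h(x))=y$, which gives the inclusion $h^{-1}(z)\subseteq f^{-1}(y)$. Since $f$ is finite the right-hand side is finite, hence so is $h^{-1}(z)$; as $z$ was arbitrary, $h$ is finite. It is worth recording that no hypothesis on $g$ is used, since this observation is what makes the forward implication in part (iii) free.

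For part (ii), I would fix $y\in Y$; the finiteness of $f$ gives that $f^{-1}(y)$ is finite. The key step is to identify $g^{-1}(y)$ as the image of this finite set under $h$. The inclusion $h(f^{-1}(y))\subseteq g^{-1}(y)$ is immediate, while the reverse inclusion is precisely where surjectivity of $h$ enters: given $z\in g^{-1}(y)$, I choose a preimage $x\in h^{-1}(z)$, and then $f(x)=g(z)=y$ places $x$ in $f^{-1}(y)$ with $h(x)=z$. Thus $g^{-1}(y)=h(f^{-1}(y))$ is the image of a finite set and so is finite. This is the only place needing a moment's care — making sure surjectivity of $h$ is used exactly to produce preimages landing back in $f^{-1}(y)$ — and I expect it to be the main (though minor) obstacle.

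For part (iii), the forward implication is exactly part (i), where injectivity of $g$ plays no role. For the converse, I assume $h$ finite and $g$ injective, and fix $y\in Y$. Injectivity forces $g^{-1}(y)$ to be either empty or a singleton $\{z\}$: in the first case $f^{-1}(y)=h^{-1}(\emptyset)=\emptyset$, and in the second $f^{-1}(y)=h^{-1}(z)$, which is finite because $h$ is. Either way $f^{-1}(y)$ is finite, which establishes the equivalence and finishes the argument.
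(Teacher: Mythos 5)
Your proof is correct in all three parts, and the fiberwise identity $f^{-1}(y)=h^{-1}(g^{-1}(y))$ is exactly the right tool: part (i) via the inclusion $h^{-1}(z)\subseteq f^{-1}(g(z))$, part (ii) via $g^{-1}(y)=h(f^{-1}(y))$ with surjectivity supplying the reverse inclusion, and part (iii) via injectivity forcing $g^{-1}(y)$ to be empty or a singleton. The paper itself omits the proof of this lemma, declaring it straightforward, so there is no argument to diverge from; your write-up simply supplies the routine verification the authors had in mind.
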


For any set $X$, we have the monoid $\mbox{M}$ of all finite
endomaps $X\to X.$ Clearly, every injective endomap is in
$\mbox{M}$, and this monoid has not constant maps. We shall denote
$(f)=f\circ \mbox{M}=\{f\circ g;g\in \mbox{M}\}$ the principal
(right) ideal of $\mbox{M}$ generated by $f\in \mbox{M}$. Any
general monoid $\mbox{M}$ is a preordered set with the relation
defined by $f\leq g$ if there exists $h$ such that $f=g\circ h$,
that is $(f)\subseteq(g)$. A subset $I$ of $\mbox{M}$ is an ideal
if, and only if, it is an hereditary subset, that is, $f\leq g$
and $g\in I$ imply $f\in I$. When $\mbox{M}$ is a monoid of
endomaps, $f\leq g$ implies $\mbox{Im}(f)\subseteq \mbox{Im}(g)$,
but the converse is not true in general.

\begin{Le}\label{image}${}${\rm
%The following statement about the monoid $\mbox{M}$ of finite maps
%of a set holds:
If $f,g\in \mbox{M}$ are finite endomaps of certain set $X,$ then
$f\leq g$ if, and only if, $\mbox{Im}(f)\subseteq\mbox{Im}(g).$
 }
\end{Le}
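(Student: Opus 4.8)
The plan is to prove the two implications separately, noting that only one of them carries content. The forward implication is immediate and was in fact already recorded in the paragraph preceding the lemma: if $f \leq g$, so that $f = g \circ h$ for some $h \in \mbox{M}$, then each value $f(x) = g(h(x))$ lies in $\mbox{Im}(g)$, whence $\mbox{Im}(f) \subseteq \mbox{Im}(g)$. This direction holds for any monoid of endomaps and uses no finiteness whatsoever, so I would dispose of it in a single line.

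For the converse, assume $\mbox{Im}(f) \subseteq \mbox{Im}(g)$; my goal is to build a finite endomap $h : X \to X$ with $f = g \circ h$, which is exactly the relation $f \leq g$. First I would observe that for every $x \in X$ the value $f(x)$ lies in $\mbox{Im}(f) \subseteq \mbox{Im}(g)$, so the fiber $g^{-1}(f(x))$ is nonempty. Choosing, for each $x$, an element $h(x) \in g^{-1}(f(x))$ then defines a map $h$ satisfying $g(h(x)) = f(x)$ for all $x$, that is, $g \circ h = f$ by construction.

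The one step that requires care, and the only place the hypothesis enters, is checking that this $h$ actually belongs to $\mbox{M}$, i.e. that it has finite fibers; this is precisely the property that fails for arbitrary endomaps and so explains why the converse needs finiteness. Here I would use the inclusion $h^{-1}(z) \subseteq f^{-1}(g(z))$: indeed, if $h(x) = z$ then $f(x) = g(h(x)) = g(z)$, so $x \in f^{-1}(g(z))$. Since $f$ is finite, the set $f^{-1}(g(z))$ is finite, hence so is $h^{-1}(z)$, and therefore $h \in \mbox{M}$, giving $f \leq g$. I expect this finite-fiber verification to be the main (indeed, the only nontrivial) point; it is worth remarking that the argument uses only that $f$ is finite and never the finiteness of $g$.
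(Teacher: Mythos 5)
Your proof is correct and follows essentially the same route as the paper: choose any $h$ with $f = g\circ h$ (possible by the image inclusion) and then observe $h$ has finite fibers. The only difference is cosmetic — where the paper cites Lemma \ref{cofA6}(i) for the finiteness of $h$, you prove that fact inline via the inclusion $h^{-1}(z)\subseteq f^{-1}(g(z))$, which is exactly the (omitted) argument behind that lemma.
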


\begin{proof}[Proof]
Given $f,g:X\to X$ such that $\mbox{Im}(f)\subseteq \mbox{Im}(g)$,
we can choose a map $h:X\to X$ such that $f=g\circ h$. Observe
that, by Lemma \ref{cofA6}(i), $h\in \mbox{M}$.
\end{proof}

%%%%%%%%%%%%%%
\subsection{The monoid $\MM$}
%%%%%%%%%%%

We are particularly interested in the case $X=\NN$, in order to
know properties of the corresponding monoid $\MM$ of finite endomaps $\NN\to \NN$. %In this
%case we have some conditions of existence.
We shall say that an endomap $f:\NN\to\NN$ is \emph{finally
injective} if there exists $p\in\NN$ such that the restriction
$f_{|\NN_{\geq p}}:\NN_{\geq p}\rightarrow \NN$ is injective. By
Lemma \ref{cofinite}(ii), finally injective maps are finite.

If $A\subseteq\NN$ is infinite, then the \emph{enumerating map} of
$A$ is the unique injective and order-preserving map
$u_A:\NN\to\NN$ such that $A=\text{Im}(u_A)$. It is clear that all
enumerating maps belong to $\MM$.

\begin{Le}\label{monoid}${}${\rm
The following statements about the monoid $\MM$ hold:
\begin{enumerate}
\item[(i)] Given $m,n\in\NN$ there exists $f\in\MM$ such that $f(m)=n$.
\item[(ii)] For any infinite $A\subseteq\NN$ the map $u_A$ is a split
monomorphism in $\MM$.
\item[(iii)] Given $g\in \MM$ with $A=\text{Im}(g)$, there exists a split
epimorphism $h\in\MM$ such that $g=u_A\circ h$, and there exists
an idempotent $e\in \MM$ such that $(g)=(e)$.
\end{enumerate}
 }
\end{Le}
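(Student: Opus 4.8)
The plan is to treat the three parts largely independently, since each isolates a different structural feature of $\MM$. For (i), I would simply exhibit an explicit map: send $m\mapsto n$ and define $f$ on $\NN\setminus\{m\}$ by any injection onto $\NN\setminus\{n\}$ (for instance, the unique order-preserving bijection between these two cofinite sets). Such an $f$ is a bijection, hence injective, hence finite by Lemma \ref{cofinite}(ii), and it satisfies $f(m)=n$, which settles (i).

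For (ii), recall that $u_A:\NN\to\NN$ is the enumerating map of the infinite set $A$, so it is injective and order-preserving with $\mathrm{Im}(u_A)=A$. Since $u_A$ is injective, it is finite by Lemma \ref{cofinite}(ii), so indeed $u_A\in\MM$. To show it is a split monomorphism in $\MM$ I need a retraction $r\in\MM$ with $r\circ u_A=\id$. The natural choice is to define $r$ on $A=\mathrm{Im}(u_A)$ as the inverse of the bijection $u_A:\NN\to A$, and to extend $r$ arbitrarily (say by a constant or by $0$) on $\NN\setminus A$; then $r\circ u_A=\id$ by construction. The only point requiring care is that $r\in\MM$, i.e.\ that $r$ is finite: on $A$ it is injective, and $A$ is cofinite or co-infinite depending on the case, so I would instead invoke Lemma \ref{cofinite}(ii) by choosing the extension of $r$ off $A$ to keep $r$ injective on a cofinite set, or more robustly apply Lemma \ref{cofA6}(i) to the relation $\id=r\circ u_A$, which forces $u_A$ to be finite and, reading the factorization the other way, constrains $r$; the cleanest route is to arrange $r$ to be injective on the cofinite complement structure and appeal directly to Lemma \ref{cofinite}(ii).

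For (iii), given $g\in\MM$ with $A=\mathrm{Im}(g)$, note that $A$ is infinite by Lemma \ref{cofinite}(i), so $u_A$ exists. Since $\mathrm{Im}(g)=A=\mathrm{Im}(u_A)$, Lemma \ref{image} gives $g\le u_A$, i.e.\ there is $h\in\MM$ with $g=u_A\circ h$. Because $u_A$ is injective and $g,u_A$ are finite, I can check $h$ is surjective: $u_A\circ h=g$ has image $A=\mathrm{Im}(u_A)$, and injectivity of $u_A$ forces $\mathrm{Im}(h)=\NN$. A section of $h$ can then be built using the retraction $r$ from part (ii): set $e=h\circ(\text{section of }h)$ or, more directly, $e=h\circ r\circ u_A$-type composite, to produce an idempotent; the defining equations $e\circ e=e$ and $(e)=(g)$ should follow from $h$ being a split epi and $u_A$ a split mono, since then $g=u_A\circ h$ exhibits $g$ as a composite of a split mono after a split epi, whose associated idempotent $e=(\text{section of }u_A)\circ g$ generates the same principal ideal.

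The main obstacle I anticipate is entirely in the finiteness bookkeeping for the retraction and section maps in (ii) and (iii): the equations $r\circ u_A=\id$ and $h$ split epi are immediate set-theoretically, but one must verify at each step that the auxiliary maps actually lie in $\MM$ (have finite fibers). I expect Lemmas \ref{cofinite}(ii), \ref{cofA6}(i)--(ii), and \ref{image} to handle every such verification, so the real work is choosing the extensions off the relevant images carefully enough that injectivity on a cofinite set is preserved, after which $e=s\circ g$ for a chosen section $s$ of $u_A$ yields the idempotent with $(e)=(g)$.
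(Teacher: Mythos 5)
There are genuine gaps in parts (ii) and (iii). In (ii), your plan for verifying that the retraction $r$ is finite cannot work as stated when $\NN\setminus A$ is infinite. Any retraction of $u_A$ must agree with $u_A^{-1}$ on $A$, and $u_A^{-1}:A\to\NN$ is already surjective; hence every point of $\NN\setminus A$ duplicates a value taken on $A$, so when $\NN\setminus A$ is infinite \emph{no} extension of $u_A^{-1}$ is injective on a cofinite subset, and Lemma \ref{cofinite}(ii) is unavailable. (Extending by a constant is worse: it creates an infinite fiber, so that $r\notin\MM$.) Your fallback via Lemma \ref{cofA6}(i) applied to $\id=r\circ u_A$ also fails: that lemma only tells you the \emph{first} factor $u_A$ is finite and places no constraint on $r$ --- indeed the constant extension shows a split mono in $\MM$ can have non-finite retractions. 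What is needed is a retraction whose fibers are bounded directly: the paper takes $p_A(n)=\min\{k\in\NN;\;n\leq u_A(k)\}$, for which $p_A\circ u_A=\id$ and $p_A^{-1}(b)\subseteq\{n;\,n\leq u_A(b)\}$ is finite by inspection, with no appeal to injectivity. (Alternatively, a two-to-one extension of $u_A^{-1}$ works, but its finiteness must likewise be checked by counting fibers, not via the lemmas you cite.)

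In (iii), the idempotent you propose is wrong: if $s$ denotes the retraction of $u_A$ (what you call a ``section''), then $s\circ g=s\circ u_A\circ h=h$, which is in general neither idempotent nor a generator of $(g)$; the same collapse happens to your composite $h\circ r\circ u_A=h$. The correct idempotent is the other composite, $e=u_A\circ p_A$, for which $e\circ e=u_A\circ(p_A\circ u_A)\circ p_A=e$ and $(e)=(u_A)$. Moreover, to conclude $(g)=(e)$ you need the reverse inclusion $(u_A)\subseteq(g)$, i.e.\ a factorisation $u_A=g\circ k$ with $k\in\MM$, which you never produce: apply Lemma \ref{image} in both directions (since $\mathrm{Im}(g)=\mathrm{Im}(u_A)=A$) to get $g=u_A\circ h$ \emph{and} $u_A=g\circ k$ with $h,k\in\MM$; then $u_A=u_A\circ h\circ k$ and injectivity of $u_A$ gives $h\circ k=\id$, which simultaneously exhibits $h$ as a split epimorphism in $\MM$ --- your argument only establishes that $h$ is surjective, and surjectivity alone does not provide a section \emph{lying in} $\MM$ unless you add the (true, but unstated) observation that any set-theoretic section of a surjection is injective and hence finite by Lemma \ref{cofinite}(ii). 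Part (i) of your proposal is fine and matches the paper's ``finally injective'' construction in spirit.
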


\begin{proof}[Proof]
(i) Given $m,n\in\NN$, it is easy to check that there exists a
finally injective map $f:\NN\to \NN$ such that $f(m)=n$. (ii) If
we define the map $p_A:\NN\to\NN$ by
$p_A(n)=\min\{k\in\NN;\hspace{3pt}n\leq u_A(k)\}$, then $p_A\circ
u_A=id$. Moreover, for any $b\in\NN$, $n\in p_A^{-1}(b)$ implies
$n\leq u_A(b)$ we have that $p_A^{-1}(b)$ is finite; this proves
that $p_A$ is a finite map. (iii) By (i) the equality $(g)=(u_A)$
holds so $g=u_A\circ h$ and $u_A=g\circ k$, with $h,k\in \MM$;
subsequently, $h\circ k=id$ since $u_A$ is a monomorphism.
Obviously, the composite $e=u_A\circ p_A\in \MM$ is an idempotent
verifying $(u_A)=(e)$ by (ii).
\end{proof}

We need more notation about the set $\Omega$ of all ideals of an
arbitrary monoid $\mbox{M}$. First observe that $\Omega$ is an
$\mbox{M}$-set with the action given by the ideal $$\langle f\in I
\rangle=\{g\in \mbox{M};\hspace{3pt}f\circ g\in I\}$$ Moreover,
$\Omega$ is a locale \cite{m-m,l-l2} with the operators:
\begin{itemize}
\item[-] Implication: $I\to J=\{f\in \mbox{M};\langle f\in
I\rangle\subseteq\langle f\in J\rangle\}$,

\item[-] Negation: $\neg I=I\to\emptyset=\{f\in \mbox{M};\langle f\in
I\rangle=\emptyset\}$, and

\item[-] Double negation: $\neg \neg I=\{f\in \mbox{M};\forall g\in
\mbox{M},\exists h\in \mbox{M}, f\circ g\circ h\in I\}$.
\end{itemize}

\begin{Rem}\label{truev}${}${\rm
Clearly, any general monoid $\mbox{M}$ is an $\mbox{M}$-set, and
each $f\in \mbox{M}$ determines a unique morphism of
$\mbox{M}$-sets, denoted with the same letter: $$f:\mbox{M}\to
\mbox{M}, \hspace{10pt}f(g)=f\circ g$$ Then, for any ideal $I$ of
$\mbox{M}$, $f^{-1}((f)\cap I)=\langle f\in I \rangle$. The ideal
$(f)$ is the \emph{orbit} of $f$, and $\langle f\in I \rangle$ is
the ``measure'' in terms of ideals of the piece in the orbit of
$f$ which is contained in $I$. $\langle f\in I \rangle=\emptyset$
(respectively $\mbox{M}$) if, and only if, $(f)\cap I=\emptyset$
(respectively $(f)$). When $f$ is mono there is a bijection
$(f)\cap I\cong\langle f\in I \rangle$.}
\end{Rem}

A (\emph{Grothendieck}) \emph{topology} $\mathbf{J}$ on a monoid
$\mbox{M}$ is an $\mbox{M}$-subset of $\Omega$ such that
$\mbox{M}\in\mathbf{J}$ and verifies the following \emph{local
condition}:
\begin{enumerate}
\item[(LC)] Consider ideals $I\in\Omega$, $J\in\mathbf{J}$.
If $\langle f\in I \rangle\in\mathbf{J}$ for any $f\in J,$ then $I\in\mathbf{J}$.
\end{enumerate}

The set of all topologies in $\mbox{M}$ forms a complete lattice,
with bottom element $\{\mbox{M}\}$ and top element $\Omega$. Any
monoid $\mbox{M}$ has the \emph{double negation topology},
constituted by the ideals $I$ satisfying $\neg\neg I=\mbox{M}$
($\neg\neg-$\emph{dense} ideals), that is,
$$\mathbf{J}_{\neg\neg}=\{I\in\Omega;\forall f\in \mbox{M},
\exists g\in \mbox{M}, f\circ g\in I\}.$$

\begin{Rem}\label{dn}${}${\rm
The condition $\neg\neg I=\mbox{M}$ is equivalent to $\neg
I=\emptyset$. Moreover, if $I\in\mathbf{J}_{\neg\neg}$ then
$\mathbf{J}_{\neg\neg}=\{J\in\Omega;I\subseteq \neg\neg J\}$. This
description is useful when there is a special ideal in
$\mathbf{J}_{\neg\neg}$.
 }
\end{Rem}

It is well known that every topology is a filter in $\Omega$,
though the converse is not true. The filter
$\{J\in\Omega;I\subseteq J\}$ is a topology if $I$ is a two-sided
and idempotent ideal \cite{k-l,l-l1}. By analogy with Example
\ref{int}, given an ideal $I\in\Omega$ we can consider the family
of ideals $\int(I)=\{J\in\Omega;I\cap J\neq\emptyset\}$, which is
an upper subset of $\Omega$. We have a monotone map
$\int:\Omega\to\mathcal{P}(\Omega)$ with
$\int(\emptyset)=\emptyset$ and
$\int(\mbox{M})=\{I\in\Omega;I\neq\emptyset\}$.

\begin{Rem}\label{at}${}${\rm
We observe that $\int(\mbox{M})$ is a topology if, and only if,
$\mbox{M}$ is atomic (i.e., for any $f,g\in \mbox{M}$, there
exists $h,k\in \mbox{M}$ such that $f\circ h=g\circ k$). In this
case the topology is called the \emph{atomic topology}, denoted
$\mathbf{J}_{at}$, and it satisfies
$\mathbf{J}_{at}=\mathbf{J}_{\neg\neg}$ (particular case of atomic
site defined in \cite{b-d}).
 }
\end{Rem}

\begin{Prop}\label{odn}${}${\rm
$\int (\mbox{M})^o=\mathbf{J}_{\neg\neg}$. }
\end{Prop}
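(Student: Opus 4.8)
The plan is to unwind both sides to a common set-theoretic condition on an ideal $I\in\Omega$ and then match them directly. Since the meet in the $\wedge$-lattice $\Omega$ is intersection and its top element is $\mbox{M}$, and since $\int(\mbox{M})=\{J\in\Omega;J\neq\emptyset\}$ is a nonempty upper subset (it contains $\mbox{M}$), Lemma \ref{filinup} applies verbatim and yields
$$\int(\mbox{M})^o=\{I\in\Omega;\forall J\in\int(\mbox{M}),\ I\cap J\in\int(\mbox{M})\}=\{I\in\Omega;\forall J\neq\emptyset,\ I\cap J\neq\emptyset\}.$$
On the other hand, recalling $(f)=f\circ\mbox{M}$, the condition $(f)\cap I\neq\emptyset$ is exactly the existence of some $g\in\mbox{M}$ with $f\circ g\in I$, so the double negation topology can be rewritten as
$$\mathbf{J}_{\neg\neg}=\{I\in\Omega;\forall f\in\mbox{M},\ (f)\cap I\neq\emptyset\}.$$
The task thus reduces to showing these two displayed sets coincide.

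First I would prove the inclusion $\int(\mbox{M})^o\subseteq\mathbf{J}_{\neg\neg}$: given $I\in\int(\mbox{M})^o$ and any $f\in\mbox{M}$, the principal ideal $(f)$ is nonempty, since it contains $f=f\circ\id$; hence it meets $I$ by the description of $\int(\mbox{M})^o$ above, and this is precisely the defining condition of $\mathbf{J}_{\neg\neg}$.

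For the reverse inclusion $\mathbf{J}_{\neg\neg}\subseteq\int(\mbox{M})^o$, I would use the elementary fact that every nonempty ideal contains a principal one: if $I\in\mathbf{J}_{\neg\neg}$ and $J\neq\emptyset$, pick any $f\in J$; since $J$ is a right ideal we have $(f)=f\circ\mbox{M}\subseteq J$, and by hypothesis $(f)\cap I\neq\emptyset$, whence $J\cap I\supseteq(f)\cap I\neq\emptyset$. This gives $I\in\int(\mbox{M})^o$.

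I do not expect any genuine obstacle here, as the argument is a direct translation of the definitions. The only points requiring a moment's care are the identification of the lattice meet in $\Omega$ with set-theoretic intersection, so that Lemma \ref{filinup} computes $\int(\mbox{M})^o$ as written, together with the two observations that principal ideals $(f)$ are nonempty and that every nonempty ideal absorbs the principal ideal generated by any of its elements. These are exactly what convert the quantification over all nonempty $J$ into the quantification over all $f\in\mbox{M}$ appearing in $\mathbf{J}_{\neg\neg}$.
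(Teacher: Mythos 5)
Your proof is correct, and it differs from the paper's in one describable way. The first step is identical: both you and the paper use Lemma \ref{filinup} (meets in $\Omega$ being intersections, top element $\mbox{M}$) to identify $\int(\mbox{M})^o$ with $\{I\in\Omega;\ \forall J\neq\emptyset,\ I\cap J\neq\emptyset\}$. Where you diverge is the middle translation. The paper converts ``$I$ meets every nonempty ideal'' into the single equation $\neg I=\emptyset$ via the Heyting adjunction ($I\cap J=\emptyset$ if, and only if, $J\subseteq\neg I$, so $\neg I$ is the largest ideal disjoint from $I$), and then invokes the equivalence $\neg I=\emptyset\Leftrightarrow\neg\neg I=\mbox{M}$ of Remark \ref{dn}. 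You bypass the negation operator entirely: you take the quantifier description $\mathbf{J}_{\neg\neg}=\{I;\ \forall f\,\exists g,\ f\circ g\in I\}$, i.e.\ ``$I$ meets every principal ideal $(f)$'', and reduce quantification over nonempty ideals to quantification over principal ones, using that $(f)\ni f$ is nonempty and that a nonempty right ideal $J$ absorbs $(f)$ for each $f\in J$. The two routes are linked by the identity $\neg I=\{f;\ (f)\cap I=\emptyset\}$ (cf.\ Remark \ref{truev}), so your observation ``every nonempty ideal contains a principal one'' does exactly the work of the paper's adjunction step. Your version is marginally more elementary, needing nothing beyond the definitions of ideal and of $\mathbf{J}_{\neg\neg}$; the paper's is marginally slicker and leaves the reusable statement $\neg I=\emptyset$ in hand, which the paper exploits again (Remark \ref{dn}, Theorem \ref{Ldn}(iii)). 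Both arguments are complete and correct.
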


\begin{proof}[Proof]
Given $I\in\Omega$, we have that $I\in\int (\mbox{M})^o$ if, and
only if, $I\cap J\neq\emptyset$ for any ideal $J\neq\emptyset .$
In other words, $I\cap J=\emptyset$ implies $J=\emptyset$; but
$I\cap J=\emptyset$ means $J\subseteq\neg I$, so $\neg
I=\emptyset$ (equivalently $\neg\neg I=\mbox{M}$). Conversely, if
$I\in\mathbf{J}_{\neg\neg}$ then $\neg I=\emptyset$ and therefore
$I\in \int (\mbox{M})^o$.
\end{proof}

The above description of $\mathbf{J}_{\neg\neg}$ is general for
any monoid, but we can characterise the double negation topology
on $\MM$ in the particular language of subsets.

\begin{Prop}\label{dnM}${}${\rm
Suppose an ideal $I$ of $\MM$. Then:
\begin{enumerate}
\item[(i)] $\neg I=\{f\in \MM;\hspace{3pt}\forall g\in
I,\mbox{Im}(f)\cap\mbox{Im}(g)\;\text{finite}\}$.

\item[(ii)] $I\in\mathbf{J}_{\neg \neg}$ if, and only if, $\forall
f\in\MM$, $\exists g\in I$, $\mbox{Im}(f)\cap \mbox{Im}(g)$ is
infinite. \end{enumerate} }
\end{Prop}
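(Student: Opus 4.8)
The plan is to unwind the abstract monoid-theoretic definitions of negation and double negation given earlier into concrete statements about images of finite endomaps of $\NN$, using the characterisation of the divisibility preorder supplied by Lemma \ref{image}. The crucial observation is that for finite endomaps $f,g\in\MM$, the condition $f\leq g$ is \emph{equivalent} to $\mbox{Im}(f)\subseteq\mbox{Im}(g)$, so that membership of a composite in $I$ can be read off at the level of images. Since $I$ is hereditary (an ideal is exactly a $\leq$-downward closed subset), $f\circ g\in I$ forces $\mbox{Im}(f\circ g)\subseteq\mbox{Im}(g')$ for some $g'\in I$, and I will exploit this repeatedly.

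First I would prove (i). By definition $f\in\neg I$ means $\langle f\in I\rangle=\emptyset$, i.e.\ there is \emph{no} $g\in\MM$ with $f\circ g\in I$. I would rewrite ``$f\circ g\in I$ for some $g$'' in terms of images: the key lemma is that, given $f\in\MM$, there exists $g\in\MM$ with $f\circ g\in I$ \emph{if and only if} there exists $h\in I$ with $\mbox{Im}(f)\cap\mbox{Im}(h)$ infinite. The forward direction uses that $\mbox{Im}(f\circ g)\subseteq\mbox{Im}(f)$ together with the fact (Lemma \ref{cofinite}(i)) that $f\circ g\in I\subseteq\MM$ has infinite image, whence $\mbox{Im}(f)\cap\mbox{Im}(f\circ g)=\mbox{Im}(f\circ g)$ is infinite and $f\circ g\in I$ serves as the required $h$. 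For the converse, given $h\in I$ with $B=\mbox{Im}(f)\cap\mbox{Im}(h)$ infinite, I would let $u_B$ be the enumerating map of $B$; since $\mbox{Im}(u_B)=B\subseteq\mbox{Im}(f)$, Lemma \ref{image} gives $u_B\leq f$, so $u_B=f\circ g$ for some $g\in\MM$, and since $\mbox{Im}(u_B)=B\subseteq\mbox{Im}(h)$ we get $u_B\leq h\in I$, so $u_B=f\circ g\in I$ by heredity. Negating this equivalence yields exactly the description of $\neg I$ in (i): $f\in\neg I$ iff $\mbox{Im}(f)\cap\mbox{Im}(g)$ is finite for every $g\in I$.

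For (ii) I would combine (i) with Remark \ref{dn}, which states that $I\in\mathbf{J}_{\neg\neg}$ iff $\neg I=\emptyset$. By part (i), $\neg I=\emptyset$ means: no $f\in\MM$ satisfies ``$\mbox{Im}(f)\cap\mbox{Im}(g)$ finite for all $g\in I$''; equivalently, every $f\in\MM$ admits some $g\in I$ with $\mbox{Im}(f)\cap\mbox{Im}(g)$ infinite, which is the stated condition. I expect the main obstacle to be the converse direction of the image-level lemma underlying (i)—that is, manufacturing a \emph{genuine} factorisation $u_B=f\circ g$ in $\MM$ from the mere inclusion of images, and ensuring that the lift $g$ is itself finite. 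This is precisely where Lemma \ref{image} (and behind it Lemma \ref{cofA6}(i)) does the essential work, converting the image inclusion into a divisibility relation and guaranteeing that the factor lies in $\MM$; everything else is a routine translation of quantifiers.
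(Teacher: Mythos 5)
Your proposal is correct and follows essentially the same route as the paper: both hinge on Lemma \ref{image} to factor the enumerating map $u_B$ of the infinite intersection $B=\mbox{Im}(f)\cap\mbox{Im}(g)$ through both $f$ and the element of $I$ (using heredity of ideals and Lemma \ref{cofinite}(i) for the easy direction), and both deduce (ii) from (i) via the equivalence $I\in\mathbf{J}_{\neg\neg}\Leftrightarrow\neg I=\emptyset$ of Remark \ref{dn}. The only difference is presentational: you spell out as an explicit biconditional lemma what the paper argues by contradiction and dismisses as ``the converse is clear.''
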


\begin{proof}[Proof]
(i) Observe that $f\in\neg I$ means $(f)\cap I=\emptyset$. Now,
take $g\in I$ and suppose that $A=\mbox{Im}(f)\cap \mbox{Im}(g)$
is infinite. Then, by Lemma \ref{image}, $u_A$ factorises through
both maps $u_A=f\circ f'=g\circ g'$, with $f',g'\in\MM$. Therefore
$u_A\in(f)\cap I$ and $A$ must be finite. The converse is clear.
(ii) Use the fact that $I\in\mathbf{J}_{\neg \neg}$ if, and only
if, $\neg I=\emptyset$.
\end{proof}

For any subset $A\subseteq\NN$, the set
$\mbox{Cont}(A)=\{f\in\MM;\hspace{3pt}\mbox{Im}(f)\subseteq A\}$
is an ideal of $\MM$, called the \emph{content} of $A$.

\begin{Th}\label{acont}{\rm
For any subset $A\subseteq\NN$:
\begin{enumerate}
\item[(i)] $A$ is finite if, and only if, $\mbox{Cont}(A)=\emptyset$.
\item[(ii)] $A$ is cofinite if, and only if, $\mbox{Cont}(A)\in\mathbf{J}_{\neg \neg }$.
\end{enumerate}}
\end{Th}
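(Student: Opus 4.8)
The plan is to reduce both statements to the image-based descriptions already established: Lemma \ref{cofinite}(i) (a finite endomap of an infinite set has infinite image), the enumerating maps $u_B\in\MM$ of infinite subsets $B\subseteq\NN$, the elementary characterisation of cofiniteness recalled before Corollary \ref{cof} (a subset $A$ is cofinite if, and only if, $A\cap B$ is infinite for every infinite $B\subseteq\NN$), and above all Proposition \ref{dnM}, which translates membership in $\mathbf{J}_{\neg\neg}$ into a statement about infinite intersections of images.

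For part (i) I would argue directly through the images of the maps involved. If $A$ is finite and $f\in\mbox{Cont}(A)$, then $\mbox{Im}(f)\subseteq A$ is finite; but $f$ is a finite endomap of the infinite set $\NN$, so Lemma \ref{cofinite}(i) forces $\mbox{Im}(f)$ to be infinite, a contradiction, whence $\mbox{Cont}(A)=\emptyset$. Conversely, if $A$ is infinite then its enumerating map $u_A$ lies in $\MM$ and satisfies $\mbox{Im}(u_A)=A\subseteq A$, so $u_A\in\mbox{Cont}(A)$ and the content is nonempty; taking the contrapositive yields the equivalence.

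For part (ii) I would unwind Proposition \ref{dnM}(ii): $\mbox{Cont}(A)\in\mathbf{J}_{\neg\neg}$ means that for every $f\in\MM$ there is some $g$ with $\mbox{Im}(g)\subseteq A$ and $\mbox{Im}(f)\cap\mbox{Im}(g)$ infinite. Assuming $A$ cofinite, fix $f\in\MM$; by Lemma \ref{cofinite}(i) the set $\mbox{Im}(f)$ is infinite, so by the cofinite characterisation $B=\mbox{Im}(f)\cap A$ is infinite. Then $g=u_B$ satisfies $\mbox{Im}(g)=B\subseteq A$ and $\mbox{Im}(f)\cap\mbox{Im}(g)=B$ infinite, so the criterion holds. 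Conversely, if $A$ is not cofinite then $\NN\setminus A$ is infinite; taking $f=u_{\NN\setminus A}$ gives $\mbox{Im}(f)=\NN\setminus A$, which is disjoint from $\mbox{Im}(g)\subseteq A$ for every $g\in\mbox{Cont}(A)$, so $f\in\neg\,\mbox{Cont}(A)$ and hence $\mbox{Cont}(A)\notin\mathbf{J}_{\neg\neg}$ by Proposition \ref{dnM}(ii).

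The argument is essentially bookkeeping once Proposition \ref{dnM} is in hand; the only point requiring a little care is the forward direction of (ii), where one must exhibit a witness $g$ whose image lies \emph{inside} $A$. This is exactly where the enumerating map of the infinite set $\mbox{Im}(f)\cap A$ does the work, and where the ``infinite intersection with every infinite set'' formulation of cofiniteness is the one that applies, rather than the naive ``complement is finite'' phrasing.
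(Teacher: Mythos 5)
Your proof is correct, and it takes a recognisably different route from the paper's in part (ii). The paper argues directly from the quantifier definition of $\mathbf{J}_{\neg\neg}$: given $f\in\MM$ it sets $B=\mbox{Im}(f)$, pulls back to $C=f^{-1}(A\cap B)$, and exhibits the explicit composite $f\circ u_C\in\mbox{Cont}(A)$; for the converse it takes $f=u_B$ for an \emph{arbitrary} infinite $B$, extracts $g$ with $f\circ g\in\mbox{Cont}(A)$, and concludes $A\cap B$ infinite, verifying the criterion of Corollary \ref{cof} for every $B$. You instead factor everything through Proposition \ref{dnM}(ii), which lets you avoid both the composition and the inverse image: your witness $g=u_{A\cap\mbox{Im}(f)}$ lies in $\mbox{Cont}(A)$ outright, and your converse is a contrapositive with a single witness $f=u_{\NN\setminus A}$, whose image is disjoint from that of every element of $\mbox{Cont}(A)$ (this also covers $A$ finite, where the condition holds vacuously). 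What each approach buys: yours is more modular and shorter, since the factorisation work (choosing $h$ with $f=g\circ h$ via Lemma \ref{image}) is already buried in the proof of Proposition \ref{dnM}; the paper's is self-contained relative to the definition of $\mathbf{J}_{\neg\neg}$ and produces the explicit element $f\circ g\in I$, which matches the subsequence-quantifier pattern ($\forall f\,\exists g,\ f\circ g\in I$) that motivates the whole paper and that is reused immediately afterwards in conditions (a) and (b) before Proposition \ref{E}. Part (i) of your argument is identical to the paper's.
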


\begin{proof}[Proof]
(i) If $A$ is finite, then $\mbox{Cont}(A)=\emptyset$ since any
$f\in\MM$ has infinite image by Lemma \ref{cofinite}(i). If $A$ is
infinite then $u_A\in \mbox{Cont}(A)$. (ii) We will use the fact
that, by Corollary \ref{cof}, a subset $A$ is cofinite if, and
only if, for any infinite subset $B$, $A\cap B$ is infinite.
Indeed, suppose $A$ cofinite and take $f\in\MM.$ Then
$B=\mbox{Im}(f)$ is infinite, so $A\cap B$ and $C=f^{-1}(A\cap B)$
are infinite. Considering $g=u_C$ in $\MM$, we have that $f\circ
g\in\MM$ with $\mbox{Im}(f\circ g)\subseteq A,$ that is, $f\circ
g\in \mbox{Cont}(A).$ This means that
$\mbox{Cont}(A)\in\mathbf{J}_{\neg \neg }$. Conversely, given $B$
infinite, we take $f=u_B$. Since
$\mbox{Cont}(A)\in\mathbf{J}_{\neg \neg }$ there exists $g\in\MM$
such that $f\circ g\in \mbox{Cont}(A)$. Taking the infinite subset
$C=\mbox{Im}(g)$ we have that $f(C)\subseteq A\cap B$. But $f(C)$
is infinite because $f$ is injective. We conclude that $A\cap B$
is infinite.
\end{proof}

\begin{Rem}{\rm After Proposition \ref{odn} and Theorem \ref{acont},
$A\subseteq\NN$ is infinite if and only if $\mbox{Cont}(A)\in\int
(\mbox{M})$, and $A\subseteq\NN$ is cofinite if and only if
$\mbox{Cont}(A)\in\int (\mbox{M})^o$.} \end{Rem}

In the proof of Theorem \ref{acont}(i) we saw that $u_A\in
\mbox{Cont}(A)$ when $A\subseteq\NN$ is infinite. This relation
can be improved as follows:

\begin{Prop}\label{ainf}{\rm
Consider a subset $A\subseteq\NN$ and its associated ideal
$\mbox{Cont}(A)$. Then $A$ is infinite if, and only if,
$\mbox{Cont}(A)$ is principal. }
\end{Prop}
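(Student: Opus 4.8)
The plan is to exhibit an explicit generator in the infinite case and to rule out principality in the finite case by an elementary observation. Concretely, I claim that for infinite $A$ the content ideal coincides with the principal ideal generated by the enumerating map, namely $\mbox{Cont}(A)=(u_A)$, while for finite $A$ the ideal $\mbox{Cont}(A)$ is empty and hence cannot be principal. Since these two situations are exhaustive and mutually exclusive, establishing both gives the stated equivalence.

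For the infinite case I would argue by double inclusion. Recall from the proof of Theorem \ref{acont}(i) that $u_A\in\MM$ with $\mbox{Im}(u_A)=A$, so $u_A\in\mbox{Cont}(A)$. For the inclusion $(u_A)\subseteq\mbox{Cont}(A)$, take any $f=u_A\circ g$ with $g\in\MM$; then $f\in\MM$ because $\MM$ is closed under composition, and $\mbox{Im}(f)\subseteq\mbox{Im}(u_A)=A$, so $f\in\mbox{Cont}(A)$. For the reverse inclusion $\mbox{Cont}(A)\subseteq(u_A)$, take $f\in\mbox{Cont}(A)$, so that $f\in\MM$ and $\mbox{Im}(f)\subseteq A=\mbox{Im}(u_A)$. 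Here is where Lemma \ref{image} does the work: since both $f$ and $u_A$ are finite endomaps and $\mbox{Im}(f)\subseteq\mbox{Im}(u_A)$, we conclude $f\leq u_A$, i.e. $(f)\subseteq(u_A)$, which precisely means $f=u_A\circ h$ for some $h\in\MM$. Thus $f\in(u_A)$ and the two ideals agree, so $\mbox{Cont}(A)$ is principal.

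For the converse I would proceed by contraposition: if $A$ is finite, then by Theorem \ref{acont}(i) we have $\mbox{Cont}(A)=\emptyset$. It remains only to note that the empty ideal is never principal, since any principal ideal $(f)=f\circ\MM$ contains its generator $f=f\circ\id$ and is therefore nonempty. Hence $\mbox{Cont}(A)$ is not principal whenever $A$ is finite, which is exactly the contrapositive of the implication ``$\mbox{Cont}(A)$ principal $\Rightarrow A$ infinite''.

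I do not anticipate a genuine obstacle in this argument; the only point requiring care is the inclusion $\mbox{Cont}(A)\subseteq(u_A)$, where one must resist the temptation to pass directly from image containment to factorisation. The containment $\mbox{Im}(f)\subseteq\mbox{Im}(u_A)$ alone does not yield $f\leq u_A$ for arbitrary endomaps, as remarked before Lemma \ref{image}; it is precisely the finiteness of $f$ (guaranteed by $f\in\MM$) that licenses the application of Lemma \ref{image} and hence the desired factorisation. Everything else is routine verification.
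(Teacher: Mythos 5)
Your proof is correct and takes essentially the same route as the paper: both show $\mbox{Cont}(A)=(u_A)$ for infinite $A$ and, for the converse, reduce to the fact that every map in $\MM$ has infinite image (the paper argues directly from a generator $f$ with $\mbox{Im}(f)\subseteq A$ via Lemma \ref{cofinite}(i), while your contrapositive through Theorem \ref{acont}(i) is the same fact repackaged). If anything, your explicit appeal to Lemma \ref{image} for the inclusion $\mbox{Cont}(A)\subseteq(u_A)$ is more careful than the paper's literal wording, which asserts that any $u\in\mbox{Cont}(A)$ has the form $u_B$ --- something true in $\EE$ but not for arbitrary elements of $\MM$, and exactly the pitfall you correctly flagged.
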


\begin{proof}[Proof]
If $A$ is infinite then, obviously, $(u_A)\subseteq
\mbox{Cont}(A)$. Moreover, given any $u\in \mbox{Cont}(A)$, we
have $u=u_B$ with $B\subseteq A$; that is, $u\leq u_A$. But this
fact implies that $u\in(u_A)$; hence $(u_A)=\mbox{Cont}(A)$.
Conversely, if $\mbox{Cont}(A)=(f)$ for some $f\in \MM,$ then
$\mbox{Im}(f)\subseteq A$, and therefore $A$ is infinite since so
is $\mbox{Im}(f)$.
\end{proof}

Now we consider the submonoid $\EE\subseteq\MM$ formed by all the
enumerating maps $u_A$ with $A\subseteq\NN$ infinite, so that
$\EE\cong\mathcal{P}_\infty(X)$. $\EE$ is constituted by the maps
$u:\NN\to\NN$ such that $m<n$ implies $u(m)<u(n)$; hence $u(n)\leq
n$ for any $n\in\NN$, and the fixed points of $u$ gives an initial
subset of $\NN$. As $\MM$, the submonoid $\EE$ is not abelian and
has not constants.

In the proof of Theorem \ref{acont}(ii) the following two general
conditions were considered for the ideal $\mbox{Cont}(A)$:
\begin{enumerate}
\item[(a)] $\forall f\in\MM, \exists v\in\EE, f\circ v\in I$.

\item[(b)] $\forall f\in\MM, \exists g\in\MM, f\circ g\in I$.
\end{enumerate}

It was proved (a), that clearly implies (b), and the converse is
also true since we have the following result:

\begin{Prop}\label{E}${}${\rm
For every $f\in \MM$, there exists $u\in\EE$ such that $f\circ u\in\EE$.
}
\end{Prop}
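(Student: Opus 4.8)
The plan is to reformulate the statement in concrete terms: finding $u\in\EE$ with $f\circ u\in\EE$ amounts to exhibiting an infinite subset $B=\{b_0<b_1<\cdots\}\subseteq\NN$ on which $f$ is \emph{strictly increasing}, i.e. with $f(b_0)<f(b_1)<\cdots$. Indeed, if such a $B$ exists, then its enumerating map $u=u_B$ lies in $\EE$ and satisfies $u_B(n)=b_n$, so that $(f\circ u)(n)=f(b_n)$ defines a strictly increasing map $\NN\to\NN$; being monotone and injective, $f\circ u$ belongs to $\EE$. Thus the whole problem reduces to producing the subset $B$, which I would construct by a greedy recursion.

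I would build the $b_n$ as follows: set $b_0=0$ and, given $b_n$, let $b_{n+1}$ be any natural number with $b_{n+1}>b_n$ and $f(b_{n+1})>f(b_n)$. The entire argument hinges on showing that such a $b_{n+1}$ always exists, and this is the single place where the hypothesis $f\in\MM$ (finite fibers) is essential, so I expect it to be the conceptual core rather than any delicate computation. The key observation is that the set of ``bad'' candidates $\{m>b_n;\ f(m)\leq f(b_n)\}$ is contained in the finite union of fibers $f^{-1}(0)\cup\cdots\cup f^{-1}(f(b_n))$, hence is finite; consequently the set of ``good'' candidates $\{m>b_n;\ f(m)>f(b_n)\}$ is cofinite in $\NN_{\geq b_n+1}$, in particular nonempty, so the recursion never stalls. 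Without finite fibers this would fail outright (a constant $f$ admits no strictly increasing restriction), which is precisely why $f\in\MM$ is the hypothesis that drives the proof.

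Finally I would assemble the pieces: the recursion yields $b_0<b_1<\cdots$ with $f(b_0)<f(b_1)<\cdots$, so $B=\{b_n;\ n\in\NN\}$ is infinite, $u_B\in\EE$ is defined with $u_B(n)=b_n$, and $f\circ u_B$ is strictly increasing, hence in $\EE$. The only remaining verifications are the two immediate facts that $u_B(n)=b_n$ and that a strictly increasing map belongs to $\EE$, both reading off directly from the definitions of the enumerating map and of $\EE$. (One could instead invoke Lemma \ref{cofinite}(i) to note that $\mbox{Im}(f)$ is infinite, but the fiber-finiteness estimate above delivers the needed step more directly.)
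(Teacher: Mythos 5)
Your proof is correct and takes essentially the same approach as the paper: both produce an infinite subset $B\subseteq\NN$ on which $f$ is strictly increasing, using the finiteness of fibers (preimages of initial segments are finite) to guarantee $B$ is infinite, and then conclude $f\circ u_B\in\EE$. The only cosmetic difference is that the paper defines $B$ explicitly in one stroke as the set of ``record'' indices $\{n\in\NN;\,k<n\Rightarrow f(k)<f(n)\}$, whereas you build it by greedy recursion; the underlying idea is identical.
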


\begin{proof}[Proof]
Taking $B=\{n\in\NN;k<n\Rightarrow f(k)<f(n)\}$ we have that
$f\circ u_B\in\EE$. Observe that $B$ is infinite because $f\in
\MM$.
\end{proof}

Therefore, we are led to consider whether the monoid $\MM$ can be
replaced by the monoid $\EE$ in Theorem \ref{acont}. For any ideal
$I$ of $\MM$, $I_\EE=I\cap\EE$ is an ideal in $\EE$. In
particular, when $A\subseteq\NN$ is infinite we have that
$u_A\in\EE$ and we can consider the principal ideal
$(u_A)=u_A\circ\MM$ in $\MM$, and the ideal
$(u_A)_\EE=(u_A)\cap\EE$. Note that $(u_A)_\EE$ is precisely the
principal ideal generated by $u_A$ in $\EE$, that is,
$(u_A)_\EE=u_A\circ\EE$. Hence Proposition \ref{ainf} can be also
stated for the monoid $\EE$:

\begin{Prop}\label{ainfe}{\rm
If $A\subseteq\NN$ is infinite, then
$\mbox{Cont}_\EE(A)=(u_A)_\EE$. }
\end{Prop}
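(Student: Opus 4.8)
The plan is to reduce the statement to Proposition \ref{ainf} by intersecting with $\EE$. The proof of that proposition establishes more than its stated equivalence: for $A$ infinite it gives the equality of ideals $\mbox{Cont}(A) = (u_A)$ in $\MM$. Since $\mbox{Cont}_\EE(A) = \mbox{Cont}(A) \cap \EE$ and $(u_A)_\EE = (u_A) \cap \EE$ by definition, intersecting the equality $\mbox{Cont}(A) = (u_A)$ with $\EE$ yields
$$\mbox{Cont}_\EE(A) = \mbox{Cont}(A) \cap \EE = (u_A) \cap \EE = (u_A)_\EE,$$
which is exactly the assertion. Thus the entire argument rests on the already-established equality $\mbox{Cont}(A) = (u_A)$ together with the two definitional identifications.

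The only ingredient beyond Proposition \ref{ainf} that deserves a word is the identity $(u_A) \cap \EE = u_A \circ \EE$ recorded just before the statement, which guarantees that $(u_A)_\EE$ is genuinely the principal ideal generated by $u_A$ inside $\EE$. The inclusion $u_A \circ \EE \subseteq (u_A) \cap \EE$ is clear, since a composite of monotone injective maps is again monotone injective and obviously lies in $(u_A)$. For the reverse inclusion I would take $f = u_A \circ g \in \EE$ with $g \in \MM$ and check that $g \in \EE$: injectivity of $g$ follows at once from injectivity of $f$, and if $m < n$ then $u_A(g(m)) = f(m) < f(n) = u_A(g(n))$ forces $g(m) < g(n)$ because $u_A$ is strictly increasing, so $g$ is monotone as well.

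There is no serious obstacle here; the whole point is that the enumerating map $u_A$ is order-reflecting, so any factor of an element of $\EE$ through $u_A$ is itself in $\EE$. A fully self-contained alternative bypasses Proposition \ref{ainf} and argues directly: given $u \in \EE$ with $\mbox{Im}(u) \subseteq A$, the composite $h = u_A^{-1} \circ u$ is well defined (since $\mbox{Im}(u) \subseteq A = \mbox{Im}(u_A)$) and satisfies $u = u_A \circ h$; the same order-reflecting computation shows $h \in \EE$, so $u \in u_A \circ \EE = (u_A)_\EE$, while the reverse inclusion is immediate. Either route confirms the equality.
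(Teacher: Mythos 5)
Your proof is correct and takes essentially the same route as the paper, which states this proposition without a separate proof, as an immediate consequence of Proposition \ref{ainf} (whose argument gives the equality $\mbox{Cont}(A)=(u_A)$ in $\MM$ for $A$ infinite) intersected with $\EE$, using the identity $(u_A)_\EE=(u_A)\cap\EE=u_A\circ\EE$ recorded just before the statement. Your verification that $u_A$ is order-reflecting, so that any factorisation of an element of $\EE$ through $u_A$ lands back in $\EE$, simply fills in the detail the paper leaves implicit in that identity.
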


\begin{Prop}\label{idEM}{\rm
For any ideal $I$ of $\MM$, $I\in\mathbf{J}_{\neg\neg}$ if, and
only if $I_\EE\in\mathbf{J}_{\neg\neg}(\EE)$. }
\end{Prop}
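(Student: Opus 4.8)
The plan is to unwind both double negation conditions into their elementary quantifier form and then feed them through Proposition \ref{E} together with the equivalence of conditions (a) and (b) recorded just before it. Concretely, applying the general definition of $\mathbf{J}_{\neg\neg}$ to the submonoid $\EE$, the statement $I_\EE\in\mathbf{J}_{\neg\neg}(\EE)$ means: for every $u\in\EE$ there is $v\in\EE$ with $u\circ v\in I_\EE$. On the $\MM$ side, $I\in\mathbf{J}_{\neg\neg}$ is exactly condition (b), and by Proposition \ref{E} this is equivalent to condition (a), namely that for every $f\in\MM$ there is $v\in\EE$ with $f\circ v\in I$. The only structural facts I will invoke are that $\EE$ is closed under composition (being a submonoid of $\MM$), that $I_\EE=I\cap\EE\subseteq I$, and that $I$ is a right ideal.

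For the forward implication I would start from $I\in\mathbf{J}_{\neg\neg}$ and use condition (a). Given an arbitrary $u\in\EE\subseteq\MM$, condition (a) produces $v\in\EE$ with $u\circ v\in I$. Since $u,v\in\EE$ and $\EE$ is a submonoid, $u\circ v\in\EE$ as well, whence $u\circ v\in I\cap\EE=I_\EE$. As $u\in\EE$ was arbitrary, this is precisely the density condition defining $\mathbf{J}_{\neg\neg}(\EE)$, so $I_\EE\in\mathbf{J}_{\neg\neg}(\EE)$.

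For the reverse implication I would start from $I_\EE\in\mathbf{J}_{\neg\neg}(\EE)$ and verify condition (b) for $I$. Fix an arbitrary $f\in\MM$. Proposition \ref{E} yields $u\in\EE$ with $f\circ u\in\EE$; applying the $\EE$-density of $I_\EE$ to the element $f\circ u\in\EE$ gives $v\in\EE$ with $(f\circ u)\circ v\in I_\EE\subseteq I$. Setting $g=u\circ v\in\EE\subseteq\MM$ and using associativity, $f\circ g=(f\circ u)\circ v\in I$, which establishes condition (b) and hence $I\in\mathbf{J}_{\neg\neg}$. There is no serious obstacle here: both directions are short, and the single genuinely nontrivial ingredient is Proposition \ref{E}, which is what lets me replace an arbitrary finite map $f\in\MM$ by a composite landing in $\EE$; the remaining care is purely bookkeeping, namely checking at each step that the witnesses I produce actually lie in $\EE$ so that the composites stay inside $\EE$ (forward) or inside the ideal $I$ (reverse).
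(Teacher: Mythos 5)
Your proof is correct and follows essentially the same route as the paper's: the forward direction restricts the density condition (a) to $\EE$ and uses closure of the submonoid under composition, while the reverse direction uses Proposition \ref{E} to replace an arbitrary $f\in\MM$ by a composite $f\circ u\in\EE$, applies the $\EE$-density of $I_\EE$, and reassembles the witness $g=u\circ v$ exactly as in the paper. No gaps; the bookkeeping about witnesses lying in $\EE$ is handled correctly.
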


\begin{proof}[Proof]
The equivalent conditions (a) and (b) above imply

\medskip
(c) $\forall u\in\EE, \exists v\in\EE, u\circ v\in I$.

\medskip
Moreover in this condition (c) we can replace the ideal $I$ of
$\MM$ by the ideal $I_\EE$ of $\EE$; this means that
$I_\EE\in\mathbf{J}_{\neg\neg}(\EE)$. Now, if (c) holds true, then
for any $f\in\MM$ there exists  $w\in\EE$ such that $f\circ
w\in\EE$ (see Proposition \ref{E}). But then there exists
$v\in\EE$ such that $(f\circ w)\circ v\in I$, that is, there
exists $u=w\circ v\in\EE$ such that $f\circ u\in I_\EE\subseteq
I$.
\end{proof}

As a corollary of Proposition \ref{idEM}, Theorem \ref{acont} has a version for $\EE$:

\begin{Co}\label{aconte}{\rm
For any $A\subseteq\NN$:
\begin{enumerate}
\item[(i)] $A$ is finite if, and only if, $\mbox{Cont}_\EE(A)=\emptyset$.
\item[(ii)] $A$ is cofinite if, and only if, $\mbox{Cont}_\EE(A)\in\mathbf{J}_{\neg \neg}(\EE)$.
\end{enumerate}}
\end{Co}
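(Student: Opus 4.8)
The plan is to obtain both statements by transporting the corresponding assertions of Theorem~\ref{acont} for $\MM$ over to $\EE$ via Proposition~\ref{idEM}. The one preliminary point I would record, as it is the hinge of the whole argument, is the bookkeeping identity $\mbox{Cont}_\EE(A)=\mbox{Cont}(A)\cap\EE=\mbox{Cont}(A)_\EE$. This is immediate from the definitions: since $\EE\subseteq\MM$, an element of $\EE$ lies in $\mbox{Cont}_\EE(A)$ exactly when it is an enumerating map with image inside $A$, which is precisely the condition for belonging to $\mbox{Cont}(A)$ while already lying in $\EE$. Once this identification is in place, the two parts follow almost formally.

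For part (i) I would argue directly, without even invoking Theorem~\ref{acont}(i). If $A$ is infinite, its enumerating map $u_A$ belongs to $\EE$ and satisfies $\mbox{Im}(u_A)=A\subseteq A$, so $u_A\in\mbox{Cont}_\EE(A)$ and the ideal is nonempty. Conversely, if $A$ is finite, then no $u\in\EE$ can satisfy $\mbox{Im}(u)\subseteq A$: being an element of $\MM$ with infinite domain $\NN$, every such $u$ has infinite image by Lemma~\ref{cofinite}(i), so its image cannot be contained in the finite set $A$. Hence $\mbox{Cont}_\EE(A)=\emptyset$, yielding the equivalence $A$ finite $\iff$ $\mbox{Cont}_\EE(A)=\emptyset$.

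For part (ii) I would simply chain the two available equivalences. By Theorem~\ref{acont}(ii), $A$ is cofinite if and only if $\mbox{Cont}(A)\in\mathbf{J}_{\neg\neg}$ in $\MM$. Applying Proposition~\ref{idEM} to the ideal $I=\mbox{Cont}(A)$, this holds if and only if $\mbox{Cont}(A)_\EE\in\mathbf{J}_{\neg\neg}(\EE)$. Finally, by the identity noted at the outset, $\mbox{Cont}(A)_\EE=\mbox{Cont}_\EE(A)$, so we conclude that $A$ is cofinite if and only if $\mbox{Cont}_\EE(A)\in\mathbf{J}_{\neg\neg}(\EE)$, as required.

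Since the proof is a direct composition of previously established equivalences, I do not expect any genuine obstacle. The only step deserving care is verifying $\mbox{Cont}_\EE(A)=\mbox{Cont}(A)_\EE$, which is exactly what guarantees that the ``restricted content'' appearing in the statement is the ideal to which Proposition~\ref{idEM} applies; everything else amounts to reading off the quantifier conditions already recorded for $\MM$.
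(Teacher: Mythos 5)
Your proof is correct and is essentially the paper's argument: the paper establishes (ii) by exactly this appeal to Proposition~\ref{idEM} applied to $I=\mbox{Cont}(A)$ combined with Theorem~\ref{acont}(ii), and proves (i) by citing Theorem~\ref{acont}(i) together with Proposition~\ref{ainfe}, whose content is precisely your direct observation that $u_A$ witnesses $\mbox{Cont}_\EE(A)\neq\emptyset$ for infinite $A$ while Lemma~\ref{cofinite}(i) forces emptiness for finite $A$. The identity $\mbox{Cont}_\EE(A)=\mbox{Cont}(A)\cap\EE=\mbox{Cont}(A)_\EE$ that you rightly single out as the hinge is left implicit in the paper, but it is the same bookkeeping.
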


\begin{proof}[Proof]
(i) Consider Theorem \ref{acont} and Proposition \ref{ainfe}. (ii)
See Proposition \ref{idEM}.
\end{proof}

%%%%%%%%%%%%%%
\section{The monoid $\EE$}
%%%%%%%%%%%

From now on, we will only consider the monoid $\EE$ so there will
be not need to make it explicit in the notation. Since each
element of $\EE$ is injective, the unique idempotent or invertible
element in $\EE$ is the identity, and for any ideal $I$ the
bijection $\langle u\in I\rangle\cong (u)\cap I$ holds 
(see Remark \ref{truev}). Moreover, this monoid is a poset, with 
$u_A\leq u_B$ given by $A\subseteq B$. We have the identity as top 
element, but meets do not exist, since in general the intersection 
of infinite subsets is not an infinite subset.

Involving the order relation, we have $I\in\mathbf{J}_{\neg\neg}$
if, and only if, for any $u\in\EE$, there exists $v\leq u$, such
that $v\in I$. In other words, for any $A\subseteq\NN$ infinite,
there exists $B\subseteq A$ infinite such that $u_B\in I$.

We will call \emph{extent} of an ideal $I$ the infinite set
$$\mbox{Ext}(I)=\bigcup_{u\in I}\mbox{Im}(u)$$ so $I\subseteq(u_{Ext(I)})$. An
ideal $I$ of $\EE$ is called \emph{extended} if $\mbox{Ext}(I)$ is
cofinite.

\begin{Le} \label{extcof}
If $I\in\mathbf{J}_{\neg\neg}$, then $I$ is extended.
\end{Le}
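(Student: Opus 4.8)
The plan is to argue by contradiction, using the characterisation of $\mathbf{J}_{\neg\neg}$ recalled just before the statement: namely, $I\in\mathbf{J}_{\neg\neg}$ means that for every infinite $A\subseteq\NN$ there exists an infinite $B\subseteq A$ with $u_B\in I$. The key observation is that membership $u_B\in I$ forces $B=\mbox{Im}(u_B)\subseteq\mbox{Ext}(I)$, directly by the definition of the extent as $\bigcup_{u\in I}\mbox{Im}(u)$.

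First I would suppose, for contradiction, that $I$ is not extended, that is, that $\mbox{Ext}(I)$ is not cofinite. Then its complement $A=\NN\setminus\mbox{Ext}(I)$ is infinite, hence an admissible input for the density condition.

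Next I would apply the density condition to this particular $A$: there exists an infinite $B\subseteq A$ with $u_B\in I$. On the one hand $B\subseteq A=\NN\setminus\mbox{Ext}(I)$; on the other hand, since $u_B\in I$, we have $B=\mbox{Im}(u_B)\subseteq\mbox{Ext}(I)$. Thus $B\subseteq\mbox{Ext}(I)\cap(\NN\setminus\mbox{Ext}(I))=\emptyset$, contradicting that $B$ is infinite. Therefore $\mbox{Ext}(I)$ must be cofinite, i.e. $I$ is extended.

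I do not expect any genuine obstacle here; the only point demanding care is to feed the \emph{complement} of the extent into the density property rather than the extent itself, so that the two inclusions obtained for $B$ collide. The result is essentially a one-line consequence of the defining properties of $\mathbf{J}_{\neg\neg}$ on $\EE$ and of $\mbox{Ext}$, and no further machinery from the earlier sections is required.
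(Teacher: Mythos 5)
Your proof is correct and is essentially the paper's own argument: the paper also takes the complement $A=\NN\setminus\mbox{Ext}(I)$, assumes it infinite, and observes that $u_A$ then witnesses the failure of density (it shows $\langle u_A\in I\rangle=\emptyset$, i.e.\ no $v\leq u_A$ lies in $I$, since every $u\in I$ has $\mbox{Im}(u)\subseteq\mbox{Ext}(I)$). The only difference is presentational: you argue by contradiction via the subset reformulation ($B\subseteq A$ infinite with $u_B\in I$), while the paper states the contrapositive in the language of $\langle u\in I\rangle$ and $\neg I$.
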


\begin{proof}[Proof] Let $A$ be the complement of $\mbox{Ext}(I)$.
If $A$ is infinite, then $u=u_A\in\EE$, and $\mbox{Im}(u\circ
v)\subseteq A$ for any $v\in\EE$, so $\langle u\in
I\rangle=\emptyset$. Hence $I\notin\mathbf{J}_{\neg\neg}$.
\end{proof}

\begin{Rem}{\rm
The reciprocal of Lemma \ref{extcof} is false. Indeed, given
$a\in\NN$, we can consider the map $u_a(n)=an$. Then the ideal
$I=\bigcup_{1<a}(u_a)$ satisfies
$\mbox{Ext}(I)=\NN\setminus\{1\}$, but it does not belong to
$\mathbf{J}_{\neg\neg}$ since, for instance, the map $p$
enumerating the set of prime numbers does not satisfy $u\leq p$
for every $u\in I$.}
\end{Rem}

Corresponding to cofinite subsets of $\NN$, we have the submonoid
$\FF\subseteq\EE$ defined by $u_A\in\FF$ if, and only if, $A$ is
cofinite. Hence the bijection $\EE\cong\mathcal{P}_\infty(X)$
induces the bijection $\FF\cong c\mathcal{K}(X)$. If $A$ is
cofinite and $K$ is its finite complement, the we also denote
$u_A=\sigma_K$; in particular, we write $\sigma_n=\sigma_{\{n\}}$,
$n\in\NN$, and the successor map $\sigma\in\mathbb{F}$,
$\sigma(n)=n+1$, is $\sigma=\sigma_0$. Corollary \ref{cof} has a
correlative result in the level of monoids $\FF\subseteq\EE$.

\begin{Prop}\label{cofmon}${}${\rm
$\FF=\{u\in\EE;\hspace{3pt}\forall v\in\EE, \exists u\wedge
v\in\EE \}$. }
\end{Prop}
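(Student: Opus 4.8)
The plan is to transport the statement across the order-isomorphism $\EE\cong\mathcal{P}_\infty(\NN)$, under which $u_A\leq u_B$ corresponds to $A\subseteq B$, and then to invoke the set-theoretic characterisation of cofinite subsets recalled just before Corollary \ref{cof}. Writing $u=u_A$ and $v=u_B$ with $A,B\subseteq\NN$ infinite, the whole question reduces to understanding when the binary meet $u_A\wedge u_B$ exists in the poset $\EE$.

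First I would determine the lower bounds. A lower bound of $u_A$ and $u_B$ in $\EE$ is an element $u_C$ (with $C$ infinite) such that $u_C\leq u_A$ and $u_C\leq u_B$, that is, $C\subseteq A$ and $C\subseteq B$, equivalently $C\subseteq A\cap B$ with $C$ infinite. Hence the set of lower bounds is nonempty exactly when $A\cap B$ is infinite, and in that case $u_{A\cap B}$ is itself a lower bound that dominates every other one, since $C\subseteq A\cap B$ gives $u_C\leq u_{A\cap B}$. Therefore the meet $u_A\wedge u_B$ exists in $\EE$ if, and only if, $A\cap B$ is infinite, and then $u_A\wedge u_B=u_{A\cap B}$. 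Conversely, if $A\cap B$ is finite there are no lower bounds at all, so no meet exists.

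With this established, the membership condition on the right-hand side unwinds directly: $u=u_A$ satisfies ``$\forall v\in\EE$, $\exists\, u\wedge v\in\EE$'' if, and only if, for every infinite $B\subseteq\NN$ the intersection $A\cap B$ is infinite. By the characterisation of cofinite subsets recalled before Corollary \ref{cof}, this last property holds precisely when $A$ is cofinite, that is, when $u_A\in\FF$. This gives the asserted equality of subsets of $\EE$.

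The argument is essentially formal once the meet is identified, so I do not expect a genuine obstacle; the only point demanding care is the verification that \emph{no} infimum exists when $A\cap B$ is finite (rather than some smaller element of $\EE$ accidentally serving as one), which is immediate once one observes that in that case the set of lower bounds is empty.
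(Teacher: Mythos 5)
Your proof is correct and follows essentially the same route as the paper: both translate the question through the isomorphism $\EE\cong\mathcal{P}_\infty(\NN)$, identify $u_A\wedge u_B=u_{A\cap B}$ when $A\cap B$ is infinite, and invoke the characterisation of cofinite subsets recalled before Corollary \ref{cof}. The only difference is cosmetic: you establish the full criterion (the meet exists if and only if $A\cap B$ is infinite, since otherwise the set of lower bounds is empty), whereas the paper simply exhibits the specific witness $B=X\setminus A$ for the converse direction.
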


\begin{proof}[Proof]
If $u_A\in\FF$ and $u_B\in\EE$, then $A\cap B$ is infinite. It is
easy to verify that $u_A\wedge u_B=u_{A\cap B}$ is a meet in
$\EE$. Conversely, if $A$ and $B=X\setminus A$ are infinite, then
the meet $u_A\wedge u_B$ does not exist.
\end{proof}

We will consider $L=\EE\setminus\FF$. If $u_A\in L$, then $A$ is
an infinite subset of $\NN$ with an infinite complementary subset.
It is clear that $L$ is an ideal of $\EE$, so the monoid is
partitioned in an ideal and a submonoid. Therefore if $u\circ v\in
L$ and $u\in\mathbb{F}$ (resp. $v\in\mathbb{F}$), then $v\in L$
(resp. $u\in L$). The ideal $L$ has this interesting property: $L$
is in bijection with the set of all Galois connections $f\dashv
g:\NN\to\NN$ of the poset $\NN$ \cite{lam}. Our next aim is to
describe $\mathbf{J}_{\neg\neg}$ on $\mathbb{E}$ by using $L$ and
$\mathbb{F}$.

\begin{Th}\label{Ldn}${}${\rm
The following statements hold:
\begin{enumerate}
\item[(i)] $L$ is an idempotent two-sided ideal of $\EE$.

\item[(ii)] $\langle u\in L\rangle=L$ for every $u\in\mathbb{F}$.

\item[(iii)] $L\in\mathbf{J}_{\neg\neg}$.
\end{enumerate}
}
\end{Th}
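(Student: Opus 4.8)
The plan is to base everything on the single composition identity $u_A\circ u_B=u_{u_A(B)}$, valid for any infinite $A,B\subseteq\NN$: since $u_A$ and $u_B$ are strictly increasing and injective, so is their composite, and its image is $u_A(\mbox{Im}(u_B))=u_A(B)$, so uniqueness of the enumerating map gives the identity. I would record at the same time the two complement estimates that drive the whole proof: $u_A(B)$ is always infinite (as $u_A$ is injective and $B$ infinite), and $\NN\setminus u_A(B)\supseteq(\NN\setminus A)\cup u_A(\NN\setminus B)$, so $\NN\setminus u_A(B)$ is infinite as soon as either $\NN\setminus A$ or $\NN\setminus B$ is infinite.

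For (i), the right-ideal (hereditary) property of $L$ is already noted in the text, so I only need the left-ideal property and idempotency. For the left-ideal property I take $u_A\in\EE$ arbitrary and $u_B\in L$; then $\NN\setminus B$ is infinite, and the complement estimate makes $\NN\setminus u_A(B)$ infinite, so $u_A\circ u_B=u_{u_A(B)}\in L$. Together with hereditariness this gives that $L$ is two-sided. For idempotency the inclusion $L\circ L\subseteq L$ is immediate from $L$ being an ideal, so the point is $L\subseteq L\circ L$: given $u_A\in L$ I split the infinite set $\NN\setminus A$ into two infinite pieces $S\sqcup T$, set $C=A\cup S$ and $D=u_C^{-1}(A)$. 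Then $C$ is infinite with infinite complement $T$, while $D$ is infinite (as $A$ is) with infinite complement $u_C^{-1}(S)$, so $u_C,u_D\in L$, and $u_C\circ u_D=u_{u_C(D)}=u_A$ because $A\subseteq C$. This is the one genuinely constructive step and the place where care is needed: the set $C$ must sit strictly between $A$ and $\NN$ so that both $C\setminus A$ and $\NN\setminus C$ stay infinite, which is exactly what the splitting of $\NN\setminus A$ guarantees.

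For (ii) I would use the partition $\EE=L\sqcup\FF$, with $\FF$ a submonoid and $L$ the two-sided ideal just established. Fix $u\in\FF$. If $v\in L$ then $u\circ v\in L$ by the left-ideal property of (i), so $L\subseteq\langle u\in L\rangle$; conversely if $v\in\FF$ then $u\circ v\in\FF$, hence $u\circ v\notin L$, so $\langle u\in L\rangle\subseteq L$. This yields $\langle u\in L\rangle=L$. One can also read the converse directly off the complement estimate, which shows $\NN\setminus u_A(B)$ is infinite precisely when $\NN\setminus B$ is.

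Finally (iii) is essentially free once (i) is in hand: $L$ is a nonempty left ideal (for instance the enumerating map of the even numbers lies in $L$), so for any $u\in\EE$ and any fixed $v\in L$ we get $u\circ v\in L$, which is exactly the condition $L\in\mathbf{J}_{\neg\neg}$. Equivalently, in the order language recalled just before the theorem, given any infinite $A$ I split it into two infinite halves to produce $B\subseteq A$ infinite with infinite complement, so $u_B\leq u_A$ with $u_B\in L$. The only real obstacle in the whole argument is the factorization in the idempotency step; everything else reduces to the composition identity and the elementary bookkeeping of infinite sets and their complements.
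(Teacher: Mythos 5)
Your proposal is correct and follows essentially the same route as the paper: the idempotency step (splitting $\NN\setminus A$ into two infinite halves, forming $C=A\cup S$ and pulling $A$ back along $u_C$) is exactly the paper's construction, and your argument for (ii) via the partition $\EE=L\sqcup\FF$ is identical. For (iii) you verify the density condition $\forall u\,\exists v,\ u\circ v\in L$ directly from the left-ideal property instead of deducing $\neg L=\emptyset$ from (ii) as the paper does, but this is only a cosmetic rephrasing of the same fact.
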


\begin{proof}[Proof]
(i) The fact of being two-sided is clear as well as that $L\circ
L\subseteq L$. We also have that $L\subseteq L\circ L$; indeed,
given $u=u_A\in L$, both $A$ and its complement $A'$ are infinite
and we can take two infinite sets $B,C$ such that $B\cup C=A'$ and
$B\cap C=\emptyset$. This way we obtain $v=u_{A\cup B}\in L$. Now
if we take $D=v^{-1}(A)$, then the function $w=u_D$ belongs to $L$
and $v\circ w=u_{v(D)}=u$. (ii) As $L$ is two-sided,
$L\subseteq\langle u\in L\rangle$ for any $u\in\EE$. For the
converse suppose $u\in\FF$. If $u\circ v\in L$, then $v\in L$
because $v\in\mathbb{F}$ implies $u\circ v\in\mathbb{F}$, which is
false. (iii) After Remark \ref{dn} we must prove that $\neg
L=\emptyset$. But this is true since, by (ii), $\langle u\in
L\rangle=L\; \text{or}\; \EE$.
\end{proof}

By Remark \ref{dn} and Theorem \ref{Ldn}(iii) we have
$$\mathbf{J}_{\neg\neg}=\{I\in\Omega;L\subseteq \neg\neg I\}$$ The
relation $L\subseteq\neg\neg I$ is always strict because
$L=\neg\neg I$ implies $L=\neg\neg L=\EE$, which is false. By
Theorem \ref{Ldn}(i), the family $[L)$ of all ideals greater or
equal than $L$ is a topology on $\EE$ \cite[Theorem 4.4]{k-l}, and
$[L)\subset\mathbf{J}_{\neg\neg}$. However, the converse relation
is false: if we consider the successor map, then we have
$(\sigma)=\{u\in\mathbb{E};\hspace{3pt}u(0)\not=0\}$ and
$\neg\neg(\sigma)=\mathbb{E}$, so $L\subseteq\neg\neg(\sigma)$ is
true but $L\subseteq(\sigma)$ is false.

\begin{Ex}\label{exdn}${}${\rm
We give some examples of ideals of $\EE$ belonging or not to
$\mathbf{J}_{\neg\neg}$:
\begin{enumerate}
\item[(i)] Since $L\in\mathbf{J}_{\neg\neg}$,
$\langle u\in L\rangle\in\mathbf{J}_{\neg\neg}$, for any
$u\in\EE$.

\item[(ii)] $(u)\in \mathbf{J}_{\neg\neg}$ if, and only if, $u\in\FF$. Hence
$\int(\FF)\subseteq\mathbf{J}_{\neg\neg}$.

\item[(iii)] If $u_A,u_B\in\EE$ and $A\cup B$ is cofinite, then
$(u_A)\cup(u_B)\in\mathbf{J}_{\neg\neg}$.

\item[(iv)] If $u\in\FF$, then $u\circ L\in\mathbf{J}_{\neg\neg}$
and $L\cup(u)$ is an ideal of $\mathbf{J}_{\neg\neg}$ which is
strictly between $L$ and $\EE$. \end{enumerate} }
\end{Ex}

\begin{Th}\label{Fdn}${}${\rm
An ideal $I$ belongs to $\mathbf{J}_{\neg\neg}$ if, and only if:
\begin{enumerate}
\item[(i)] $I$ is an extended ideal, and therefore
$u=u_{Ext(I)}\in\FF$ with $I\subseteq(u)$; and

\item[(ii)] For any $v\in L$ satisfying $v\leq u$ there exists $w\leq
v$ such that $w\in I$. \end{enumerate} }
\end{Th}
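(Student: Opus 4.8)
The plan is to prove both implications by exploiting the order-theoretic reformulation of $\mathbf{J}_{\neg\neg}$ on $\EE$ established just before Lemma \ref{extcof}: an ideal $I$ belongs to $\mathbf{J}_{\neg\neg}$ if and only if for every $u\in\EE$ there exists $v\leq u$ with $v\in I$, equivalently, for every infinite $A\subseteq\NN$ there is an infinite $B\subseteq A$ with $u_B\in I$. Write $u=u_{Ext(I)}$ throughout.

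For the forward implication, suppose $I\in\mathbf{J}_{\neg\neg}$. Condition (i) is almost immediate: Lemma \ref{extcof} gives that $I$ is extended, so $\mbox{Ext}(I)$ is cofinite, hence $u=u_{Ext(I)}\in\FF$ by definition of $\FF$; and $I\subseteq(u)$ follows from the very definition of the extent, since every $w\in I$ has $\mbox{Im}(w)\subseteq\mbox{Ext}(I)$, so $w\leq u$ and therefore $w\in(u)$ by Lemma \ref{image} (applied in $\EE$, where $\leq$ is inclusion of images). Condition (ii) is then a direct restriction of the defining property of $\mathbf{J}_{\neg\neg}$: given any $v\in L$ with $v\leq u$, in particular $v\in\EE$, so the density condition supplies $w\leq v$ with $w\in I$, which is exactly what (ii) asserts.

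For the converse, assume (i) and (ii); I must verify the density condition, i.e.\ for an arbitrary $v\in\EE$ produce $w\leq v$ with $w\in I$. The point of having restricted (ii) to $v\in L$ with $v\leq u$ is that these are the only nontrivial cases, and I plan to reduce the general $v$ to one of this form. Write $v=v_0$ and set $A=\mbox{Im}(v)$. First replace $v$ by $v'=v\wedge u=u_{A\cap\mbox{Ext}(I)}$, which exists and lies below $v$ because $u\in\FF$ (here I use Proposition \ref{cofmon}: meeting with an element of $\FF$ always yields a meet in $\EE$, and $A\cap\mbox{Ext}(I)$ is infinite since $\mbox{Ext}(I)$ is cofinite). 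Thus $v'\leq u$ and it suffices to find $w\leq v'$ in $I$. If $v'\in L$ this is exactly (ii). The remaining case is $v'\in\FF$, i.e.\ $\mbox{Im}(v')$ cofinite; here I shrink $\mbox{Im}(v')$ to an infinite subset $C$ whose complement is also infinite, so that $u_C\in L$ and $u_C\leq v'\leq u$, and then apply (ii) to $u_C$ to obtain $w\leq u_C\leq v'\leq v$ with $w\in I$.

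The main obstacle is the case analysis in the converse, specifically handling the situation where $v'=v\wedge u$ lands in $\FF$ rather than $L$: condition (ii) says nothing about such $v'$ directly, so one must manufacture an element of $L$ below it and invoke (ii) there. The routine verifications I would not belabour are that $v\wedge u$ genuinely exists and equals $u_{A\cap\mbox{Ext}(I)}$ (this is the content of Proposition \ref{cofmon} together with the cofiniteness of $\mbox{Ext}(I)$), and the elementary fact that any cofinite set contains an infinite subset with infinite complement, so that the auxiliary $u_C\in L$ can be chosen. Once these are in place, transitivity of $\leq$ assembles the desired $w\leq v$ with $w\in I$, completing the density condition and hence the equivalence.
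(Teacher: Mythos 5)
Your proof is correct, and its overall architecture is the same as the paper's: the forward direction via Lemma \ref{extcof} together with the observation that (ii) is just the density condition restricted to $v\in L$, $v\leq u$, and the converse via the case split $v\in\FF$ versus $v\in L$, using Proposition \ref{cofmon} to form meets with $u=u_{Ext(I)}$. (Incidentally, your appeal to Lemma \ref{image} for $I\subseteq(u)$ is dispensable: the paper asserts $I\subseteq(u_{Ext(I)})$ directly after defining the extent, since $u_A\leq u_B$ in $\EE$ just means $A\subseteq B$.) Where you genuinely diverge is the case $v\in\FF$, and there your treatment is more careful than the published one. The paper disposes of this case by asserting $w=u\wedge v\in I$ with $u=u_{Ext(I)}$ ``constructed as in (i)''; but nothing forces $u\wedge v$ to lie in $I$. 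Concretely, take $I=L$: then $\mbox{Ext}(L)=\NN$, so $u=id$, and for $v=id\in\FF$ one gets $u\wedge v=id\notin L$, even though $L$ satisfies (i) and (ii) and belongs to $\mathbf{J}_{\neg\neg}$ by Theorem \ref{Ldn}. Your manoeuvre --- shrink $\mbox{Im}(v\wedge u)$ to an infinite set $C$ with infinite complement, so that $u_C\in L$ and $u_C\leq v\wedge u\leq u$, then invoke (ii) to get $w\leq u_C\leq v$ with $w\in I$ --- is exactly what is needed to close this gap, resting only on the elementary fact that every infinite set contains an infinite subset with infinite complement. An alternative repair closer to the paper's phrasing: (i) forces $I\neq\emptyset$, so pick any $w'\in I$; since $v\in\FF$, the meet $w'\wedge v$ exists by Proposition \ref{cofmon}, and downward closedness of ideals gives $w=w'\wedge v\in I$ with $w\leq v$. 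Either way, your version is a complete and sound proof where the paper's own argument contains a slip.
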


\begin{proof}[Proof]
If $I\in\mathbf{J}_{\neg\neg}$, then we have (i) by Lemma
\ref{extcof}; also (ii) is clear. Conversely, suppose (i) and (ii)
hold true. Then we have to check that for any $v\in\EE$ there
exists $w\leq v$ such that $w\in I$. Indeed, if $v\in\FF$, then,
by Proposition \ref{cofmon} we obtain $w=u\wedge v\in I$, where
$u\in \FF$ is constructed as in (i). In the case $v\in L$ we have
that $u\wedge v\in L$ and therefore we can apply (ii) to get
$w\leq u\wedge v\leq v$ such that $w\in I$.
\end{proof}

%%%%%%%%%%%%
\subsection{The monoid $\FF$}
%%%%%%%%%%%%

Finally we recall some properties of the monoid $\FF$ with the
order preserving bijection $\FF\cong c\mathcal{K}(X)$. If
$K=\{k_1,\dots,k_r\}\subseteq\NN$\, with\, $k_1<\dots<k_r\,$, then
$\sigma_K=\sigma_{k_r}\circ\cdots\circ\sigma_{k_1}$.
$F=\FF\setminus\{id\}$ is a two-sided but not idempotent ideal.
$\FF$ is the free $\vee-$semilattice generated by the poset $\NN$,
and the set $\Omega$ of all ideals of $\FF$ is the free complete
Heyting algebra generated by $\mathbb{N}$, with universal map the
sequence $\mathbb{N}\rightarrow\Omega$ of principal ideals
$(\sigma_n)$ \cite[p. 22]{j-t}. $\FF$ is an atomic monoid, hence
the intersection on nonempty ideals in $\FF$ is a nonempty ideal
of $\FF$, and (Remark \ref{at}) its double negation topology of
$\FF$ is the atomic topology
\begin{center}$\int(\FF)=\int(\FF)^o=\{I\in\Omega;I\neq\emptyset\}$\end{center}

\end{document}